\Crefname{Algorithm}{Algorithm}{Algorithms}
\definecolor{selfDefinedColor}{rgb}{0.8,0.1,0.4}
\tikzset{blankboximg/.style={remember picture,white,ultra thick,draw,inner sep=0pt,outer sep=0pt}}
\tikzset{redboximg/.style={remember picture,red,ultra thick,draw,inner sep=0pt,outer sep=0pt}}
\tikzset{blueboximg/.style={remember picture,blue,ultra thick,draw,inner sep=0pt,outer sep=0pt}}
\tikzset{selfDefinedColorboximg/.style={remember picture,selfDefinedColor,ultra thick,draw,inner sep=0pt,outer sep=0pt,dashed}}
\tikzset{greenboximg/.style={remember picture,green,ultra thick,draw,inner sep=0pt,outer sep=0pt,dashed}}
\apptocmd{\sloppy}{\hbadness 10000\relax}{}{}
\title{Multidimensional TV-Stokes for image processing}
\author{
    Bin Wu\thanks{Department of Computer Science, Electrical Engineering and Mathematical Sciences, Western Norway University of Applied Sciences, Inndalsveien 28, 5063 Bergen, Norway (\email{bin.wu@hvl.no}, \email{talal.rahman@hvl.no}).}
    \and
    Xue-Cheng Tai\thanks{Department of Mathematics, University of Bergen, All\'{e}gaten 41, 5007 Bergen, Norway (\email{xue-cheng.tai@uib.no}).}
    \and
    Talal Rahman\footnotemark[1]
}
\begin{document}
\maketitle

\begin{abstract}
  A complete multidimential TV-Stokes model is proposed based on smoothing a gradient field in the first step and reconstruction of the multidimensional image from the gradient field. It is the correct extension of the original two dimensional TV-Stokes to multidimensions. Numerical algorithm using the Chambolle's semi-implicit dual formula is proposed. Numerical results applied to denoising 3D images and movies are presented. They show excellent performance in avoiding the staircase effect, and preserving fine structures.
\end{abstract}

\begin{keywords}
    Total variation, Denoising, Restoration
\end{keywords}

\section{Introduction}
\label{intro}

As one of the major techniques in computer vision, image processing is widely used in the community. One of the important tasks of image processing is image denoising which is used for removing noises from corrupted images. It plays a crucial and distinguished role, e.g. being used as a pre-step to any processing or analysis of the images. The noise in a corrupted image may depend on the type of instrument used, the procedure applied, and the environment in which the image is taken. Noises are characterized as Gaussian or salt-pepper, additive or multiplicative \cite{chan2005}. We consider only the Gaussian noise in this paper, which is additive. 

Searching for a more general and robust algorithm to handle the problem of image denoising has been one of the major interests in the image processing community. With the pioneering work of Rudin, Osher and Fatemi (cf. \cite{Rudin1992}) based on the total variation minimization for image denoising, published in 1992, their model has been widely used as the basis for most denoising algorithms because of its elegant mathematics, simplicity of implementation, and effectiveness. Allthough the model is able to preserve edges in the image, which is important for an image, it has the limitation that it exhibits staircase effect or produces patchy images, cf. \cite{Rahman2007}. A number of algorithms now exist that are based on the total variation minimization, which try to overcome this limitation, including high order algorithms, cf. \cite{Lysaker2006,Wu2018}, the training based algorithms, cf. \cite{Chen2017}, iterative regularization algorithms, cf. \cite{Osher2005,Charest2006,Marcinkowski2016}, and algorithms based on the TV-Stokes, cf. \cite{Tai2006,Rahman2007,Litvinov2011,Hahn2012,Elo2009a,Elo2009}. There are also excellent algorithms that are not based on the variational approach, e.g., the non-local algorithms, cf. \cite{Buades2005}, the block matching algorithm, cf. \cite{Dabov2007}, and the low rank algorithm, cf. \cite{Ji2010}.

The purpose of this paper is to propose a model which can be used for the data of any dimensions, e.g. 3D medical data, 2D video, and dynamic volume data. The model is expected to be simple but effective with only a small number of parameters. The TV-Stokes model is the right candidate for this. We extend the TV-Stokes model from 2D to an arbitrary dimensions, and we call it the multi-dimensional TV-Stokes. Even though the paper addresses the task of denoising, The model can be extended for use in other data restoration tasks, like image inpainting, image compression, etc. .

The structure of the paper is organized as follows. In Section 2, we present the first step of the multi-dimensional TV-Stokes and its dual formulation. is addressed accompanying with related analysis. In the Section 3, Chambolle's semi-implicit iteration is presented for the minimization. The key part of the multi-dimensional TV-Stokes is the orthogonal projection onto the constrained spaces, which is presented in Section 4, based on the singular value decomposition of the finite difference matrix $D$. Section 5 gives the second step of the proposed model, height map reconstruction using a surface fitting technique. Numerical results are presented in Section 6, comparing the proposed model with the Rudin-Osher-Fatemi (ROF) model.

\section{The vector field smoothing step of the TV-Stokes}
\label{sec:2}

The first step of the TV-Stokes is the vector field smoothing step. Its required that the desired smoothed vector field is a gradient field, that is $\bm{g} = \nabla u$ where $\bm{g} \in \mathbb{R}^{N_1 \times \ldots N_d \times d}$ and $u \in \mathbb{R}^{N_1 \times \ldots N_d}$ is a $d$-dimensional grayscale image. In the case $d = 2$, it is the same as requiring $\nabla \times \bm{g} = 0$ for the gradient field, which is equivalent to $\nabla \cdot \bm{g}^{\bot} = 0$, which is exactly the constraint used in the TV-Stokes model for 2D.

{\sloppy Let $u \in \mathbb{R}^{N_1 \times \cdots \times N_d}$ denote the components of a $d$-dimensional grayscale image with components $u_{i_1 \ldots i_d}, i_j=1,\ldots,N_j$ for $j=1,\ldots,d$. When a linear transformation defined by a matrix $T \in \mathbb{R}^{N_i \times N_i}$ is applied to the $i$-th dimension of $u$, its result will be denoted by $T_{\times i}d$. For instance,
  \[ (T_{\times2}u)_{ji_1i_3} = \sum_{i_2} T_{ji_2} u_{i_1i_2i_3}.\]
} The gradient field of $u$ is $\bm{g} = \left( g_1,g_2, \ldots, g_d \right)= \nabla{u} \in \mathbb{R}^{N_1 \times \cdots \times N_d \times d}$, where $\nabla$ signifies a discrete gradient operator with homogeneous Neumann boundary conditions. Namely, the gradient operator $\nabla u = \left( D_{\times1} u, D_{\times2} u, \ldots, D_{\times d} u \right)$ is determined by means of the following $N_i \times N_i$ differentiation matrix
\begin{equation}\label{eqs:D}
  D = \left[\begin{array}{ccccc}
      -1 & 1                        \\
         & -1 & 1                   \\
         &    & \ddots & \ddots     \\
         &    &        & -1     & 1 \\
         &    &        & 0      & 0
    \end{array}\right].
\end{equation}

Given a $d$-dimensional image $u^o \in \mathbb{R}^{N_1 \times \cdots \times N_d}$ corrupted by a Gaussian noise, a constrained variant of the Rudin-Osher-Fatemi model \cite{Rudin1992} consists in finding a gradient field $\bm{g} = \left( g_1, \ldots, g_d \right) \in \mathbb{R}^{N_1 \times \cdots \times N_d \times d}$ satisfying the minimization problem
\begin{equation}\label{eqs:TVS}
  \min_{\begin{subarray}{c} \bm{g} = \nabla u \\ u \in \mathbb{R}^{N_1 \times \cdots \times N_d} \end{subarray}}
  \left[\| \nabla \bm{g} \|_1 + \cfrac{1}{2\lambda} \| \bm{g} - {\bm{g}}^o \|_2^2 \right],
\end{equation}
where $\lambda > 0$ is a suitable scalar parameter, ${\bm{g}}^o = \nabla u^o$ and
\begin{equation*}
  h = \nabla \bm{g} =
  \left( \begin{array}{cccc}
      D_{\times1}g_1 & D_{\times2}g_1 & \cdots & D_{\times d}g_1 \\
      D_{\times1}g_2 & D_{\times2}g_2 & \cdots & D_{\times d}g_2 \\
      \vdots         & \vdots         & \cdots & \vdots          \\
      D_{\times1}g_d & D_{\times2}g_d & \cdots & D_{\times d}g_d
    \end{array} \right) \in \mathbb{R}^{N_1 \times \cdots \times N_d \times d \times d}.
\end{equation*}

The (isotropic) norms in (\ref{eqs:TVS}) are as follows:
\[ \begin{array}{l}
    \| \bm{g} \|_2 = \sqrt{ \sum_{i_1 \ldots i_d} g_{1, i_1 \ldots i_d}^2 + g_{2, i_1 \ldots i_d}^2 + \cdots + g_{d, i_1 \ldots i_d}^2 }, \\
    \| h \|_1 = \sum_{i_1 \ldots i_d} \sqrt{ \sum_{l, m=1}^d h_{lm, i_1 \ldots i_d}^2 }.
  \end{array} \]
The linear subspace $V = \{ \bm{g} = \nabla u \colon u \in \mathbb{R}^{N_1 \times \cdots \times N_d} \} \subset \mathbb{R}^{N_1 \times \cdots \times N_d \times d}$ is the range of the orthogonal projector
\begin{equation}\label{eqs:proj}
  \Pi = \nabla(\nabla^* \nabla)^{\dagger} \nabla^*,
\end{equation}
where $\nabla^*$ is the adjoint to the $\nabla$ operator, and $\dagger$ denotes the Moore-Penrose pseudoinverse. By the way, ${\bm{g}}^o = \Pi {\bm{g}}^o \in V$.

The continuous functional
\[ \| \nabla \bm{g} \|_1 + \cfrac{1}{2\lambda} \| \bm{g} - {\bm{g}}^o \|_2^2, \quad \bm{g} \in V, \]
is strictly convex, its minimum is unique and attained in the closed ball $\{ \bm{g} \colon \| \bm{g} - {\bm{g}}^o \|_2 \leq \| {\bm{g}}^o \|_2 \}$.

By the aid of a dual variable
\[p = \left( \begin{array}{cccc}
      p_{11} & p_{12} & \cdots & p_{1d} \\
      p_{21} & p_{22} & \cdots & p_{2d} \\
      \vdots & \vdots & \cdots & \vdots \\
      p_{d1} & p_{d2} & \cdots & p_{dd}
    \end{array} \right) \in \mathbb{R}^{N_1 \times \cdots \times N_d \times d \times d}\]
such that $\| \nabla \bm{g} \|_{1} = \max \limits_{\Vert p \Vert_{\infty} \leq 1} \langle \nabla \bm{g}, p \rangle = \max \limits_{\Vert p \Vert_{\infty} \leq 1} \langle \bm{g}, \nabla^* p \rangle$ the model (\ref{eqs:TVS}) reduces to the equivalent min-max formulation
\begin{equation}\label{eqs:minmax}
  \min_{\bm{g }\in V} \max_{\Vert p \Vert_{\infty} \leq 1} F(\bm{g},p),
  \quad \mbox{where} \quad
  F(\bm{g}, p) = \langle \bm{g}, \nabla^* p \rangle + \cfrac{1}{2\lambda} \langle \bm{g} - {\bm{g}}^o, \bm{g} - {\bm{g}}^o \rangle.
\end{equation}
Here the max norm is $\Vert p \Vert_{\infty} = \max_{i_1\ldots i_d} \sqrt{ \sum_{l,m=1}^d |p_{lm,i_1 \ldots i_d}|^2 }$.

The generalized minimax theorem, cf. 
\cite{Sion1957}
, justifies the equality
\begin{equation*}
  \min_{\bm{g} \in V} \max_{|p| \leq 1} F(\bm{g}, p) = \max_{|p| \leq 1} \min_{\bm{g} \in V} F(\bm{g}, p),
\end{equation*} so that the model (\ref{eqs:TVS}) reduces to the max-min formulation
\begin{equation}\label{eqs:maxmin}
  \max_{\Vert p \Vert_{\infty} \leq 1} \min_{\bm{g} \in V} \langle \bm{g}, \nabla^* p \rangle +
  \cfrac{1}{2\lambda} \langle \bm{g} - {\bm{g}}^o, \bm{g} - {\bm{g}}^o \rangle.
\end{equation}
In spite of a possible nonuniqueness of $p$, the solution $\bm{g}$ obtained by means of (\ref{eqs:maxmin}) 
is unique. Indeed, if $(\bm{g}_*, p_*)$ and $(\bm{g}, p)$ are solutions of 
(\ref{eqs:minmax}) 
and (\ref{eqs:maxmin}) respectively, then $F(\bm{g}_*, p) = F(\bm{g}, p)$ by Lemma 36.2 in \cite{Rockafellar1997}. The  strict convexity of the mapping $\bm{g} \mapsto F(\bm{g}, p)$ implies $\bm{g} = \bm{g}_*$.

Since the constraint $\bm{g} \in V$ is equivalent to the condition $\Pi \bm{g} = \bm{g}$, the term $\langle \bm{g}, \nabla^*p \rangle$ in (\ref{eqs:maxmin}) can be replaced by $\langle \bm{g}, \nabla^* p \rangle = \langle \bm{g}, \Pi \nabla^* p \rangle$. Now we observe that the unique solution
\begin{equation}\label{eqs:umin}
  \bm{g} = {\bm{g}}^o - \lambda \Pi \nabla^* p
\end{equation} of the unconstrained problem $\min_{\bm{g} \in \mathbb{R}^{N_1 \times \cdots \times N_d \times d}} \langle \bm{g}, \Pi \nabla^* p \rangle + \cfrac{1}{2\lambda} \langle \bm{g} - {\bm{g}}^o, \bm{g} - {\bm{g}}^o \rangle$ also solves the constrained problem $\min_{\bm{g} \in V} \langle \bm{g}, \nabla^* p \rangle + \cfrac{1}{2\lambda} \langle \bm{g} - {\bm{g}}^o, \bm{g} - {\bm{g}}^o \rangle$.

Under the condition (\ref{eqs:umin})
\begin{equation*}
  F(\bm{g}, p) = \cfrac{1}{2\lambda} \| {\bm{g}}^o \|_2^2 - \cfrac{1}{2\lambda} \| {\bm{g}}^o - \lambda\Pi\nabla^*p \|_2^2,
\end{equation*} and we arrive at the minimum distance problem
\begin{equation}\label{eqs:dist}
  \min_{\Vert p \Vert_{\infty} \leq 1} \| {\bm{g}}^o - \lambda \Pi \nabla^* p \|_2.
\end{equation}

\subsection{Chambolle's iteration}
\label{sec:3}

We solve the minimization problem (\ref{eqs:dist}) using Chambolle's iteration, cf. \cite{Chambolle2004,Elo2009a}. Accordingly, the Karush-Kuhn-Tucker conditions for (\ref{eqs:dist}) are given by the equation
\begin{equation}\label{eqs:KKT}
  \nabla \left( \Pi \nabla^* p - {\bm{g}}^o / \lambda \right) + | \nabla \left( \Pi \nabla^* p - {\bm{g}}^o / \lambda \right) | \cdot p = 0,
\end{equation} where the dot signifies the entrywise product.
Solution of (\ref{eqs:dist}) is approximated by the semi-implicit iteration
\begin{equation}\label{eqs:iter}
  \begin{array}{l}
    p^0 = 0, \\
    p^{k+1} = \cfrac{
      p^k - \tau \nabla \left( \Pi \nabla^* p^k - \cfrac{{\bm{g}}^o}{\lambda} \right)
    }{
      \max \left\{ 1, \left| p^k - \tau \nabla \left( \Pi \nabla^* p^k - \cfrac{{\bm{g}}^o}{\lambda} \right) \right| \right\}
    },
  \end{array}
\end{equation} where the maximum and division are both entrywise operations.
\begin{lemma}
  The semi-implicit iteration (\ref{eqs:iter}) is 1-Lipschitz \cite{heinonen2012} for
  \begin{equation}\label{eqs:tau}
    \tau \leq \cfrac{2}{\| \nabla \Pi \nabla^* \|_2}.
  \end{equation}
\end{lemma}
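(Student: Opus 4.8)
The plan is to recognize the iteration map $T \colon p^k \mapsto p^{k+1}$ defined by (\ref{eqs:iter}) as a composition $T = P \circ \Phi$ of an affine map $\Phi$ and a pointwise shrinkage $P$, and to show that each factor is nonexpansive (that is, $1$-Lipschitz) in the $\| \cdot \|_2$ norm under the stated bound on $\tau$; a composition of nonexpansive maps is again nonexpansive. Writing $A = \nabla \Pi \nabla^*$, the numerator of (\ref{eqs:iter}) is the affine map
\[ \Phi(p) = p - \tau \nabla \left( \Pi \nabla^* p - {\bm{g}}^o/\lambda \right) = (I - \tau A) p + \tau \nabla {\bm{g}}^o / \lambda, \]
while the outer operation $q \mapsto q / \max\{ 1, |q| \}$ is the map $P$.

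For the affine factor I would first record that $A$ is self-adjoint and positive semidefinite. Self-adjointness follows from $A^* = \nabla \Pi^* \nabla^* = \nabla \Pi \nabla^* = A$, using that the orthogonal projector satisfies $\Pi^* = \Pi$; positive semidefiniteness follows from
\[ \langle A p, p \rangle = \langle \Pi \nabla^* p, \nabla^* p \rangle = \langle \Pi \nabla^* p, \Pi \nabla^* p \rangle = \| \Pi \nabla^* p \|_2^2 \geq 0, \]
where the middle equality again uses $\Pi = \Pi^2 = \Pi^*$. Consequently the spectrum of $A$ lies in $[0, \| A \|_2]$, so the spectrum of $I - \tau A$ lies in $[1 - \tau \| A \|_2, \, 1]$. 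The upper endpoint is $\leq 1$ for every $\tau \geq 0$, while the lower endpoint is $\geq -1$ precisely when $\tau \| A \|_2 \leq 2$, i.e.\ when $\tau \leq 2 / \| \nabla \Pi \nabla^* \|_2$. Under this condition $\| I - \tau A \|_2 \leq 1$, and since $\Phi(p) - \Phi(p') = (I - \tau A)(p - p')$, this gives $\| \Phi(p) - \Phi(p') \|_2 \leq \| p - p' \|_2$.

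For the shrinkage factor I would observe that, at each spatial index $i_1 \ldots i_d$, the map $q \mapsto q / \max\{ 1, |q| \}$ is exactly the orthogonal projection of the $d \times d$ array $q_{lm, i_1 \ldots i_d}$, $l,m = 1, \ldots, d$, onto the closed unit ball of its Frobenius norm; projection onto a closed convex set is nonexpansive in the Euclidean norm. Since the global norm $\| \cdot \|_2$ decouples as a sum of the squared blockwise norms over the spatial indices, summing the blockwise estimates yields $\| P(q) - P(q') \|_2 \leq \| q - q' \|_2$, so $P$ is $1$-Lipschitz.

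Combining the two factors, $T = P \circ \Phi$ satisfies $\| T(p) - T(p') \|_2 \leq \| \Phi(p) - \Phi(p') \|_2 \leq \| p - p' \|_2$ whenever $\tau \leq 2 / \| \nabla \Pi \nabla^* \|_2$, which is the assertion. I expect the one step needing genuine care to be the spectral estimate on the affine factor: everything hinges on $A = \nabla \Pi \nabla^*$ being self-adjoint and positive semidefinite, which in turn relies on $\Pi$ being a genuine orthogonal projector so that $\Pi^* = \Pi^2 = \Pi$. The nonexpansiveness of the projection $P$ and the composition argument are then routine.
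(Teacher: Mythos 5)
Your proof is correct and follows essentially the same route as the paper's: both factor the iteration into the affine map $p \mapsto p - \tau \nabla \left( \Pi \nabla^* p - {\bm{g}}^o/\lambda \right)$ and the pointwise shrinkage $q \mapsto q / \max(1, |q|)$, and show each is nonexpansive. In fact your write-up is more complete than the paper's: the paper merely asserts that the affine factor is 1-Lipschitz if and only if $\| I - \tau \nabla \Pi \nabla^* \|_2 \leq 1$ and never spells out the spectral argument — self-adjointness and positive semidefiniteness of $\nabla \Pi \nabla^*$, which rest on $\Pi$ being an orthogonal projector — that converts the hypothesis $\tau \leq 2 / \| \nabla \Pi \nabla^* \|_2$ into that operator-norm bound, a step you supply explicitly.
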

\begin{proof}
  Each step of (\ref{eqs:iter}) uses two mappings: $p\mapsto p - \tau \nabla \left( \Pi \nabla^* p - {\bm{g}}^o / \lambda \right)$ and $q \mapsto q / \max(1, |q|)$. The first mapping is linear and 1-Lipschitz if and only if $\| I - \tau \nabla \Pi \nabla^* \|_2 \leq 1$, where $I$ is the identity transformation. The second mapping is always 1-Lipschitz.\qed
\end{proof}
The SVD of the matrix $D$ allows us to show that 1-Lipschitz holds when
\begin{equation}\label{eqs:tau1}
  \tau \leq \cfrac{1}{2d}.
\end{equation}

\subsection{Implementation of \texorpdfstring{$\Pi = \nabla(\nabla^* \nabla)^{\dagger} \nabla^*$}{Pi operator}}
\label{sec:4}

The intricate part in our algorithm is the implementation of the $\Pi$ operation. 
We describe here how this can be done efficiently. The singular value decomposition of the differentiation matrix $D$ is
\begin{equation*}
  D = \left[
    \begin{array}{cc}
      0 & S \\
      1 & 0
    \end{array}
    \right]
  \Sigma C,
\end{equation*} where the diagonal $\Sigma$ has the entries $\Sigma_{ii} = 2 \sin \cfrac{\pi (i-1)}{2N}$, $i=1, \ldots, N$.
The orthogonal $N\times N$ matrix $C$ of the discrete cosine transform has the entries
\begin{equation*}
  \begin{array}{l}
    C_{1j} = \sqrt{\cfrac{1}{N}}, \\
    C_{ij} = \sqrt{\cfrac{2}{N}} \cos \cfrac{\pi (i-1)(2j-1)}{2N}, \quad i=2, \ldots, N, \quad j=1, \ldots, N.
  \end{array}
\end{equation*}
The orthogonal $(N-1)\times (N-1)$ matrix $S$ of the discrete sine transform has the entries
\begin{equation*}
  S_{ij} = \sqrt{\cfrac{2}{N}} \sin \cfrac{\pi ij}{N},\quad i, j=1, \ldots, N-1.
\end{equation*}
We can use the singular value decomposition of $D$ to compute $\| \nabla \|_2$. Indeed, $\| \nabla \|_2 = \| \nabla \|_2 = 2\| \left[ \sin \cfrac{\pi (N_1-1)}{2N_1}, \ldots, \sin \cfrac{\pi (N_d-1)}{2N_d} \right] \|_2 \approx 2 \sqrt{d}$.

{\sloppy We implement $(\nabla^* \nabla)^{\dagger}$ by the fast Fourier transform.
Since $\nabla^* \nabla u = (D^TD)_{\times 1} u + (D^TD)_{\times 2} u + \ldots + (D^TD)_{\times d} u,$ the equation $\nabla^* \nabla u = f$ is equivalent to
\begin{equation*}
  \Sigma^2_{\times 1} \widehat{u} + \Sigma^2_{\times 2} \widehat{u} + \ldots + \Sigma^2_{\times d} \widehat{u} = \widehat{f},
\end{equation*} where $\widehat{u} = C_{\times 1} C_{\times 2} \ldots  C_{\times d} u$ and $\widehat{f} = C_{\times 1} C_{\times 2} \ldots C_{\times d} f$. The components of $\widehat{u}$ and $\widehat{f}$ are related by the equalities
\begin{equation*}
  \widehat{u}_{i_1 \cdots i_d} (\Sigma_{i_1 i_1} + \Sigma_{i_2 i_2} + \ldots + \Sigma_{i_d i_d}) = \widehat{f}_{i_1 \cdots i_d}, \quad i_k = 1, \ldots, N_k.
\end{equation*} Note that $\Sigma_{i_1 i_1} + \Sigma_{i_2 i_2} + \ldots + \Sigma_{i_d i_d}$ equals zero if $i_1 = \ldots = i_d = 1$ and is positive otherwise. Hence
\begin{equation*}
  \displaystyle
  \widehat{u}_{1 \cdots 1} = 0,\quad
  \widehat{u}_{i_1 \cdots i_d} = \widehat{f}_{i_1 \cdots i_d} / (\Sigma_{i_1 i_1} + \Sigma_{i_2 i_2} + \ldots + \Sigma_{i_d i_d}) \quad \mbox{if } i_1 + \ldots + i_d > d.
\end{equation*} Recall that $u = C^T_{\times 1}  C^T_{\times 2}  \cdots C^T_{\times d} \widehat{u}$. Moreover, multiplication with the matrices $C$ and $C^T$ should be carried out by the fast Fourier transform FFT.}

\section{The scalar field reconstruction step of the TV-Stokes}
\label{sec:5}

We extend the second step or the image reconstruction step of the original TV-Stokes \cite{Rahman2007} for 2D to multidimensions. Accordingly, for the given $d$-dimensional image $u^o \in \mathbb{R}^{N_1 \times \cdots \times N_d}$,  which is corrupted by a Gaussian noise, a vector matching model \cite{Rahman2007} is applied which satisfies the minimization problem,
\begin{equation}\label{eqs:TVmatch}
  \min_{
    \begin{subarray}{c}
      u \in \mathbb{R}^{N_1 \times \cdots \times N_d}
    \end{subarray}
  }
  \left[
    \| \nabla u \|_1 + \cfrac{1}{2\lambda} \| u - u^o \|_2^2 - \nabla u \cdot \cfrac{\bm{g}}{|\bm{g}|}
    \right],
\end{equation} where $\lambda > 0$ is a suitable scalar parameter, $\nabla u = \left( D_{\times1} u, D_{\times2} u, \ldots, D_{\times d} u \right)$ and
\begin{equation*}
  \| \nabla u \|_1 =: \| s \|_1 = \sum_{i_1 \ldots i_d} \sqrt{ \sum_{l=1}^d s_{l, i_1 \ldots i_d}^2 }.
\end{equation*} Note that the $\bm{g}$ is obtained from the first step and thus, in the reconstruction, it is a known term. By completing the quadratic term, the above problem (\ref{eqs:TVmatch}) can be reformed as
\begin{equation}\label{eqs:TVmatchR}
  \min_{
    \begin{subarray}{c}
      u \in \mathbb{R}^{N_1 \times \cdots \times N_d}
    \end{subarray}
  }
  \left[
    \| \nabla u \|_1 + \cfrac{1}{2\lambda} \| u + \lambda \nabla \cdot \cfrac{\bm{g}}{|\bm{g}|} - u^o \|_2^2
    \right],
\end{equation} which is strictly convex, its minimum is unique and attained in the closed ball $\Big \{ u \colon \Big \| u - (u^o - \lambda \nabla \cdot \cfrac{\bm{g}}{|\bm{g}|}) \Big \|_2 \leq \Big \| u^o - \lambda \nabla \cdot \cfrac{\bm{g}}{|\bm{g}|} \Big \|_2 \Big \}$.

With the aid of a dual variable $p = (p_{1}, p_{2}, \cdots, p_{d})$ such that $\| \nabla u \|_{1} $ $= \max \limits_{\Vert p \Vert_{\infty} \leq 1} \langle \nabla u,$ $p \rangle $ $= \max \limits_{\Vert p \Vert_{\infty} \leq 1} \langle u, \nabla^* p \rangle$ the model (\ref{eqs:TVmatch}) reduces to the equivalent min-max formulation
\begin{equation}\label{eqs:minmaxmatch}
  \begin{array}{l}
    \min_{
      u \in \mathbb{R}^{N_1 \times \cdots \times N_d}
    }
    \max_{
    \Vert p \Vert_{\infty} \leq 1
    }
    F(u,p), \\
    \mbox{where} \quad F(u, p) = \langle u, \nabla^* p \rangle + \cfrac{1}{2\lambda} \| u - u^o \|_2^2 - \nabla u \cdot \cfrac{\bm{g}}{|\bm{g}|}.
  \end{array}
\end{equation} Here the max norm is $\Vert p \Vert_{\infty} = \max_{i_1 \ldots i_d} \sqrt{ \sum_{l=1}^d |p_{l, i_1 \ldots i_d}|^2 }$.

The generalized minimax theorem, cf. \cite{Sion1957}, justifies the equality
\begin{equation*}
  \min_{
    u \in \mathbb{R}^{N_1 \times \cdots \times N_d}
  }
  \max_{
  |p|_{\infty} \leq 1
  }
  F(u,p)
  =
  \max_{
  |p|_{\infty} \leq 1
  }
  \min_{
    u \in \mathbb{R}^{N_1 \times \cdots \times N_d}
  }
  F(u,p),
\end{equation*} so that the model (\ref{eqs:TVmatch}) reduces to the max-min formulation
\begin{equation}\label{eqs:maxminmatch}
  \max_{
  \Vert p \Vert_{\infty} \leq 1
  }
  \min_{
    u \in \mathbb{R}^{N_1 \times \cdots \times N_d}
  }
  \langle u, \nabla^* p \rangle + \cfrac{1}{2 \lambda} \| u - u^o \|_2^2 - \nabla u \cdot \cfrac{\bm{g}}{|\bm{g}|}.
\end{equation} In spite of a possible nonuniqueness of $p$, the solution $u$ obtained by means of (\ref{eqs:maxminmatch}) is unique. Indeed, if $(u_*, p_*)$ and $(u, p)$ are solutions of (\ref{eqs:minmaxmatch}) and (\ref{eqs:maxminmatch}) respectively, then $F(u_*, p)=F(u, p)$ by (Lemma 36.2 of \cite{Rockafellar1997}). The strict convexity of the mapping $u\mapsto F(u, p)$ implies $u = u_*$. The unique solution is obtained as follows
\begin{equation}\label{eqs:uminmatch}
  u = u^o - \lambda (\nabla^*p + \nabla \cdot \cfrac{\bm{g}}{|\bm{g}|})
\end{equation} Under the condition (\ref{eqs:uminmatch})
\begin{equation*}
  F(u,p)
  =
  \cfrac{1}{2\lambda} \Big \| u^o - \lambda \nabla \cdot \cfrac{\bm{g}}{|\bm{g}|} \Big \|_2^2
  -
  \cfrac{1}{2 \lambda} \Big \| u^o - \lambda \nabla \cdot \cfrac{\bm{g}}{|\bm{g}|} -\lambda \nabla^* p \Big \|_2^2,
\end{equation*} and we arrive at the minimum distance problem
\begin{equation}\label{eqs:distmatch}
  \min_{
  \Vert p \Vert_{\infty} \leq 1
  }
  \Big \| u^o - \lambda \nabla \cdot \cfrac{\bm{g}}{|\bm{g}|} - \lambda \nabla^* p \Big \|_2.
\end{equation}

As in \cite{Chambolle2004,Elo2009a} the Karush-Kuhn-Tucker conditions for (\ref{eqs:distmatch}) are given by the equation
\begin{equation}\label{eqs:KKTmatch}
  \nabla \left( - \cfrac{u^o}{\lambda} + \nabla \cdot \cfrac{\bm{g}}{|\bm{g}|} + \nabla^* p \right) + \left | \nabla \left( - \cfrac{u^o}{\lambda} + \nabla \cdot \cfrac{\bm{g}}{|\bm{g}|} + \nabla^* p \right) \right | \cdot p = 0,
\end{equation} where the dot signifies the entrywise product. Solution of (\ref{eqs:distmatch}) is approximated by the semi-implicit iteration
\begin{equation}\label{eqs:itermatch}
  \begin{array}{l}
    p^0 \quad = 0, \\
    p^{k+1} = \cfrac{
      p^k - \tau \nabla \left( \nabla \cdot \cfrac{\bm{g}}{|\bm{g}|}+ \nabla^* p - \cfrac{u^o}{\lambda} \right)
    }{
      \max \left\{ 1, \left| p^k - \tau \nabla \left( \nabla \cdot \cfrac{\bm{g}}{|\bm{g}|} + \nabla^* p - \cfrac{u^o}{\lambda} \right) \right| \right\}
    },
  \end{array}
\end{equation} where the maximum and division are entrywise operations.

\begin{lemma}
  The semi-implicit iteration (\ref{eqs:itermatch}) is 1-Lipschitz for
  \begin{equation}\label{eqs:taumatch}
    \tau \leq \cfrac{2}{\| \nabla \nabla^* \|_2}.
  \end{equation}
\end{lemma}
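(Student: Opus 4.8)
The plan is to reproduce the two-map decomposition used in the proof of the earlier Lemma, since the update (\ref{eqs:itermatch}) has exactly the same structure as (\ref{eqs:iter}), with the operator $\nabla\Pi\nabla^*$ replaced by $\nabla\nabla^*$. First I would write a single step $p^k \mapsto p^{k+1}$ as the composition $N \circ T$ of the affine map
\[ T(p) = p - \tau\nabla\!\left( \nabla\cdot\cfrac{\bm{g}}{|\bm{g}|} + \nabla^* p - \cfrac{u^o}{\lambda} \right) \]
with the pointwise normalization $N(q) = q / \max(1, |q|)$. Because the Lipschitz constant of a composition is at most the product of the individual Lipschitz constants, it suffices to show that each of $T$ and $N$ is $1$-Lipschitz.

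Next I would analyze the affine map $T$. The terms $\nabla\cdot\frac{\bm{g}}{|\bm{g}|}$ and $-u^o/\lambda$ are fixed (the field $\bm{g}$ is produced by the first step and is independent of $p$), so they cancel in a difference of evaluations, giving $T(p_1) - T(p_2) = (I - \tau\nabla\nabla^*)(p_1 - p_2)$. Hence $T$ is $1$-Lipschitz if and only if $\| I - \tau\nabla\nabla^* \|_2 \leq 1$. The operator $\nabla\nabla^*$ is self-adjoint and positive semidefinite, since $\langle \nabla\nabla^* p, p \rangle = \| \nabla^* p \|_2^2 \geq 0$; its spectrum therefore lies in $[0, \|\nabla\nabla^*\|_2]$, and the eigenvalues of $I - \tau\nabla\nabla^*$ lie in $[1 - \tau\|\nabla\nabla^*\|_2, 1]$. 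The spectral norm then equals $\max\{ 1, |1 - \tau\|\nabla\nabla^*\|_2| \}$, which is bounded by $1$ exactly when $1 - \tau\|\nabla\nabla^*\|_2 \geq -1$, that is, when $\tau \leq 2/\|\nabla\nabla^*\|_2$, the stated bound (\ref{eqs:taumatch}).

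Finally, I would note that the normalization $N$ is the metric projection onto the entrywise closed unit ball, which is nonexpansive and hence automatically $1$-Lipschitz; this is verbatim the corresponding step in the earlier Lemma. I do not anticipate any real obstacle here, as the argument is parallel to the first one. The single point worth flagging is structural rather than technical: unlike the vector-field smoothing step, the reconstruction step places no gradient-field constraint on the iterate, so the projector $\Pi$ does not appear and the relevant operator is $\nabla\nabla^*$ in place of $\nabla\Pi\nabla^*$. This is precisely why the admissible step size in (\ref{eqs:taumatch}) is controlled by $\|\nabla\nabla^*\|_2$.
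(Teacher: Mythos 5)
Your proof is correct and takes essentially the same route as the paper's: the identical decomposition of each iteration step into the affine map $p \mapsto p - \tau \nabla \left( \nabla \cdot \bm{g}/|\bm{g}| + \nabla^* p - u^o/\lambda \right)$ and the pointwise normalization $q \mapsto q / \max(1, |q|)$, with $1$-Lipschitz continuity of the former reduced to the condition $\| I - \tau \nabla \nabla^* \|_2 \leq 1$ and the latter noted to be nonexpansive. The only difference is that you explicitly carry out the spectral argument (using that $\nabla\nabla^*$ is self-adjoint and positive semidefinite) linking this norm bound to the stated step-size condition $\tau \leq 2/\| \nabla \nabla^* \|_2$, a step the paper's terse proof leaves implicit.
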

\begin{proof}
  Each step of (\ref{eqs:itermatch}) uses two mappings:
  \begin{equation*}
    p \mapsto p - \tau \nabla \left( \nabla \cdot \cfrac{\bm{g}}{|\bm{g}|} + \nabla^* p -\cfrac{u^o}{\lambda} \right)
  \end{equation*} and $q \mapsto q / \max (1, |q|)$. The first mapping is linear and 1-Lipschitz if and only if $\| I - \tau \nabla \nabla^* \|_2 \leq 1$, where $I$ is the identity transformation. The second mapping is always 1-Lipschitz.\qed
\end{proof}
The SVD of the matrix $D$ allows us to show that 1-Lipschitz holds when
\begin{equation}
  \tau \leq \cfrac{1}{2d}.
  \label{eqs:tau1match}
\end{equation}

\section{Numerical experiments}
\label{sec:6}

We first test the proposed model on some 3d static data, which are computer tomography data from two lung tissues of human. In \cref{fig:slicesa}, \cref{fig:slicesb1} and \cref{fig:slicesb2}, 3 groups of orthogonal slices are presented, with respect to $xz$-plane, $xy$-plane, and $yz$-plane. In these figures, the last rows show relative positions of those slices in their embedded volume data. In the first column of each figures, the 3-slice view is taken on the raw data, which contains some unknown noise. The noise type is supposed to be Gaussian. In the middle column and the last column of each figures, comparison is presented between the denoised data of using ROF model and the proposed multidimensional TV-Stokes model. The one (last column) using the proposed multidimensional TV-Stokes model results higher fidelity to the raw data than the one (middle column) using ROF model and preserves more fine structures as well as better continuity of tube-like structures. \cref{fig:isosurfacea} and \cref{fig:isosurfaceb} show the isosurfaces of the volume data. In these figures, the results from proposed model (last column) well trade-off the smoothness and details preservation. It observes that the fine tube-like structures disappeared while the ROF model smooths the noisy data.

\begin{figure}[!htbp]
  \captionsetup[subfigure]{justification=centering}
  \centering
  \begin{minipage}{1.0\textwidth}
    \centering
    \subfloat
    {
      \includegraphics[width=0.3\linewidth,
        angle=0]{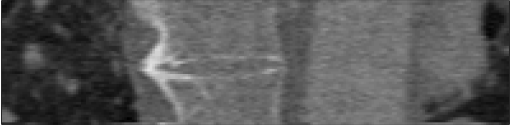}
    }
    \subfloat
    {
      \includegraphics[width=0.3\linewidth,
        angle=0]{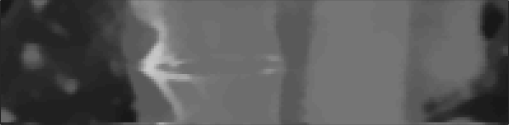}
    }
    \subfloat
    {
      \includegraphics[width=0.3\linewidth,
        angle=0]{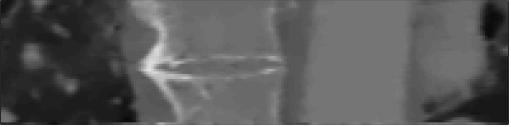}
    }
    \vspace{-0.02\linewidth}
  \end{minipage}
  \begin{minipage}{1.0\textwidth}
    \centering
    \subfloat
    {
      \includegraphics[width=0.3\linewidth,
        angle=0]{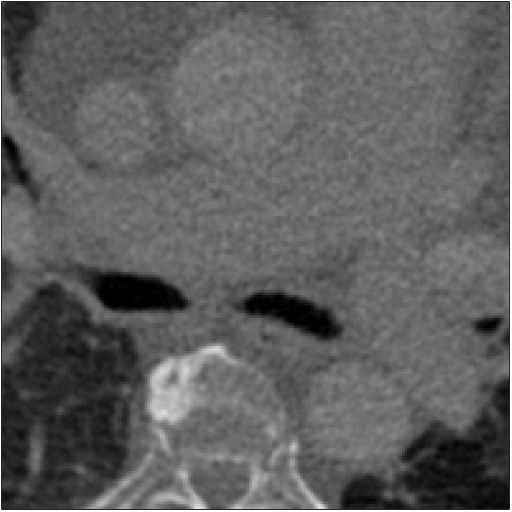}
    }
    \subfloat
    {
      \includegraphics[width=0.3\linewidth,
        angle=0]{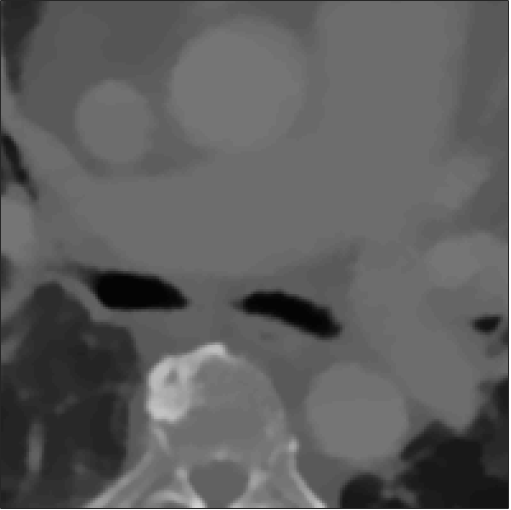}
    }
    \subfloat
    {
      \includegraphics[width=0.3\linewidth,
        angle=0]{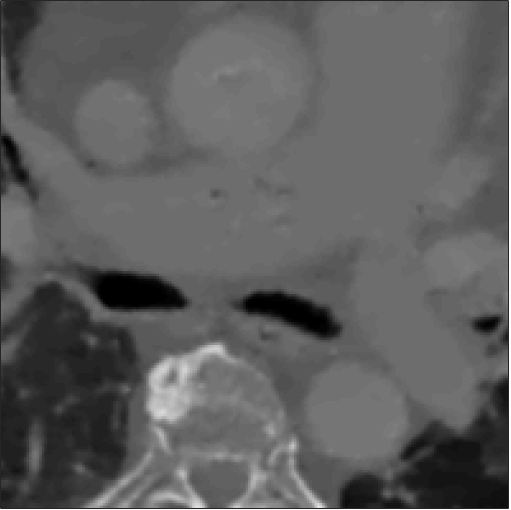}
    }
    \vspace{-0.02\linewidth}
  \end{minipage}
  \begin{minipage}{1.0\textwidth}
    \centering
    \subfloat
    {
      \includegraphics[width=0.3\linewidth,
        angle=0]{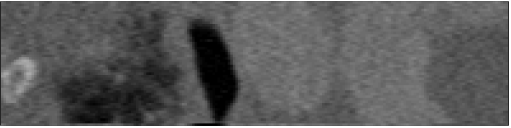}
    }
    \subfloat
    {
      \includegraphics[width=0.3\linewidth,
        angle=0]{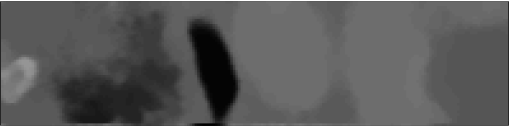}
    }
    \subfloat
    {
      \includegraphics[width=0.3\linewidth,
        angle=0]{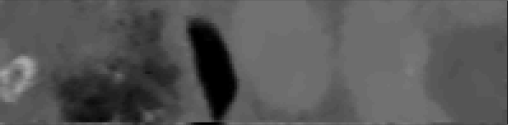}
    }
  \end{minipage}
  \caption{Visualization of computer tomography data of human lung tissue A via 3 groups of orthogonal slices. Row one, row two and row three present respectively the $xz$-slices, the $xy$-slices and the $yz$-slices. Column (a) visualizes the raw data with some noise (Gaussian noise). Column (b) visualizes the data denoised with ROF model. Column (c) visualizes the data denoised with proposed multidimensional TV-Stokes model.}
  \label{fig:slicesa}
\end{figure}

\begin{figure}[!htbp]
  \captionsetup[subfigure]{justification=centering}
  \centering
  \begin{minipage}{1.0\textwidth}
    \centering
    \subfloat
    {
      \includegraphics[width=0.3\linewidth,
        angle=0]{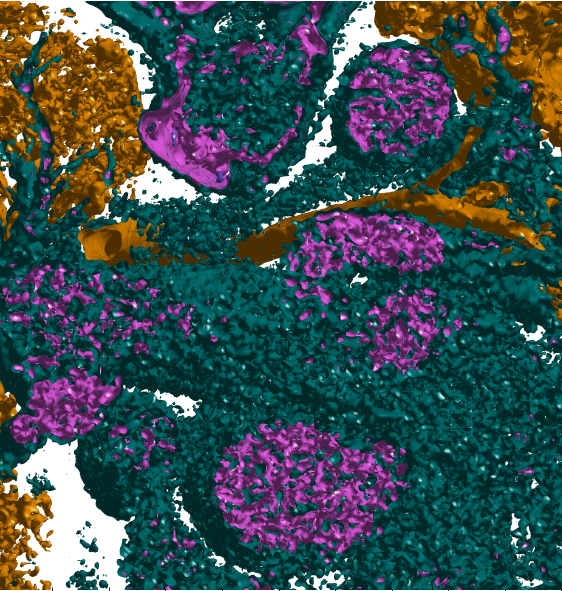}
    }
    \subfloat
    {
      \includegraphics[width=0.3\linewidth,
        angle=0]{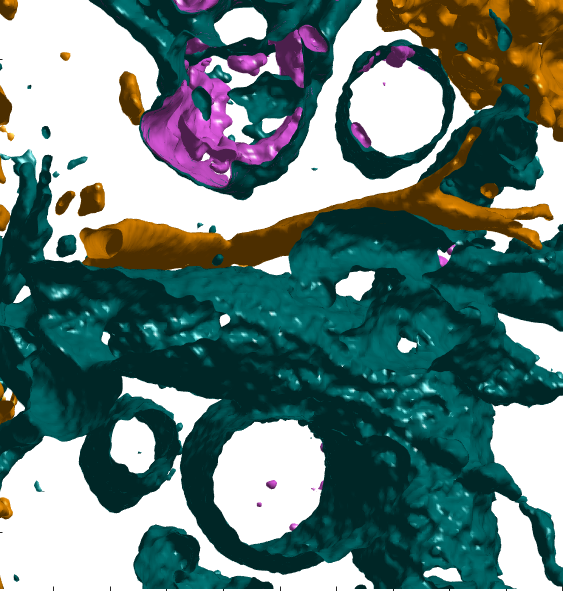}
    }
    \subfloat
    {
      \includegraphics[width=0.3\linewidth,
        angle=0]{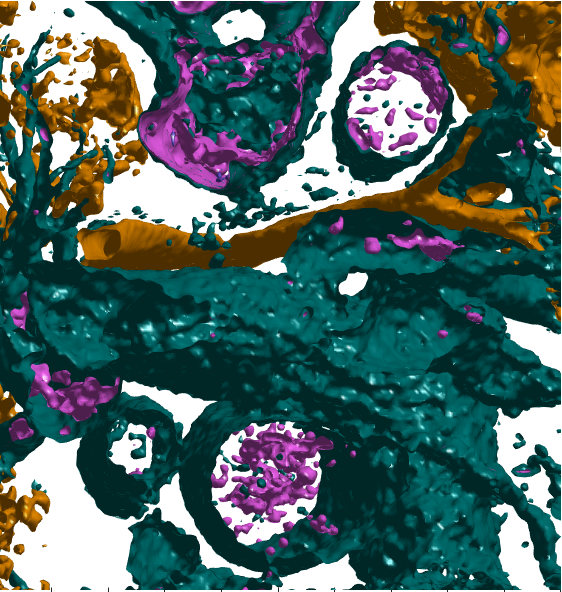}
    }
  \end{minipage}
  \caption{Isosurface show of computer tomography data of human lung tissue A. (a) raw data with some noise (Gaussian noise). (b) data denoised with ROF model. (c) data denoised with proposed multidimensional TV-Stokes model.}
  \label{fig:isosurfacea}
\end{figure}

\begin{figure}[!htbp]
  \captionsetup[subfigure]{justification=centering}
  \centering
  \begin{minipage}{1.0\textwidth}
    \centering
    \subfloat
    {
      \includegraphics[width=0.3\linewidth,
        angle=0]{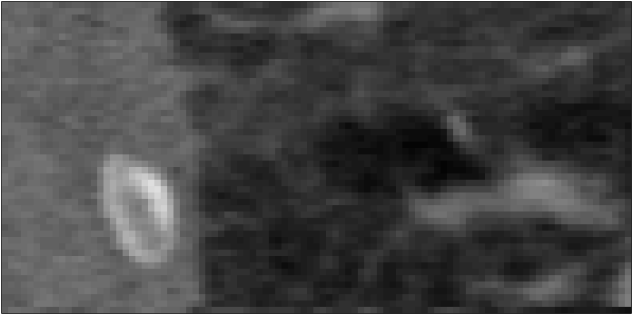}
    }
    \subfloat
    {
      \includegraphics[width=0.3\linewidth,
        angle=0]{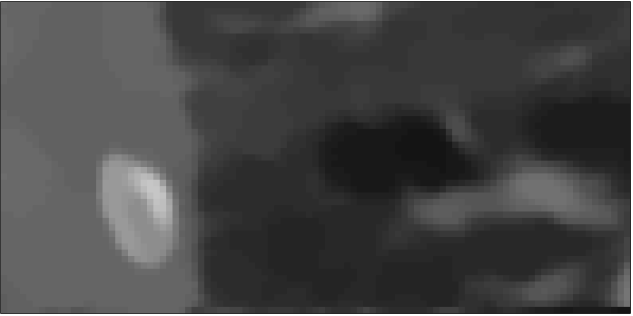}
    }
    \subfloat
    {
      \includegraphics[width=0.3\linewidth,
        angle=0]{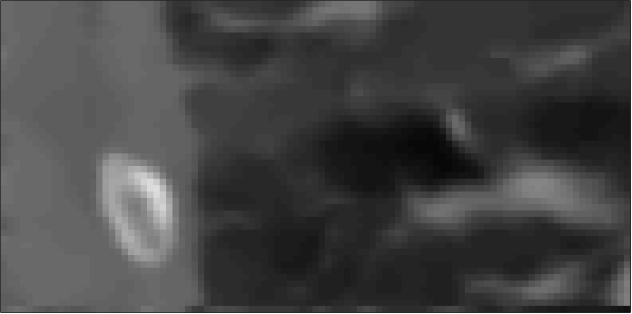}
    }
    \vspace{-0.02\linewidth}
  \end{minipage}
  \begin{minipage}{1.0\textwidth}
    \centering
    \subfloat
    {
      \includegraphics[width=0.3\linewidth,
        angle=0]{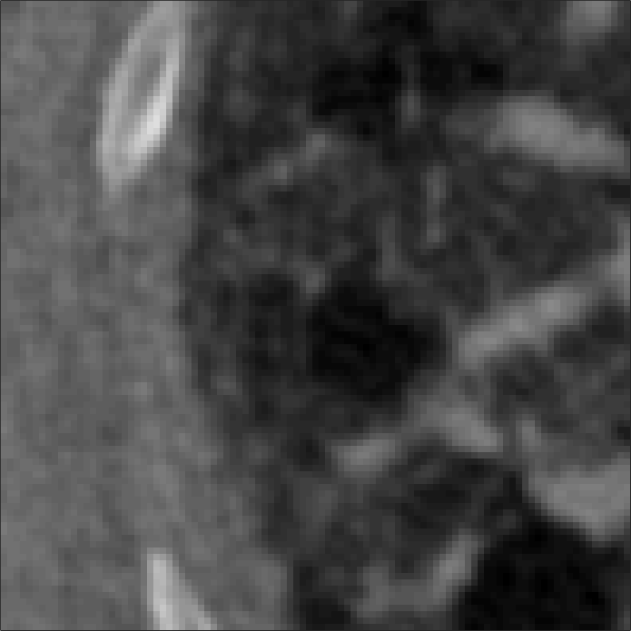}
    }
    \subfloat
    {
      \includegraphics[width=0.3\linewidth,
        angle=0]{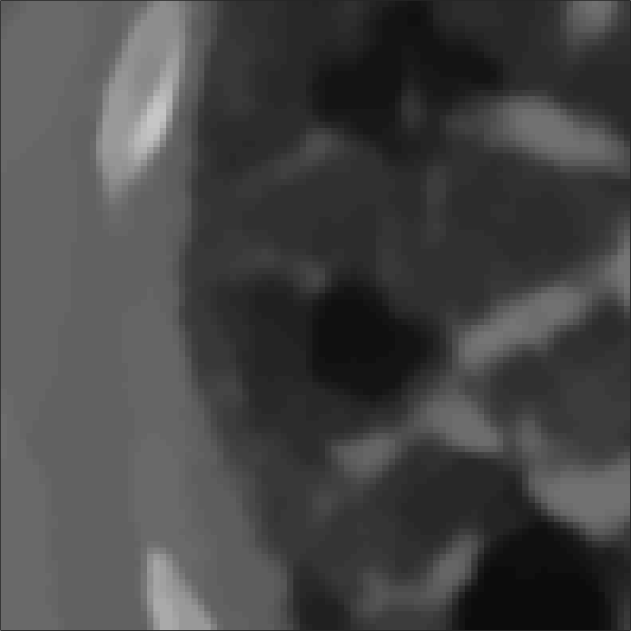}
    }
    \subfloat
    {
      \includegraphics[width=0.3\linewidth,
        angle=0]{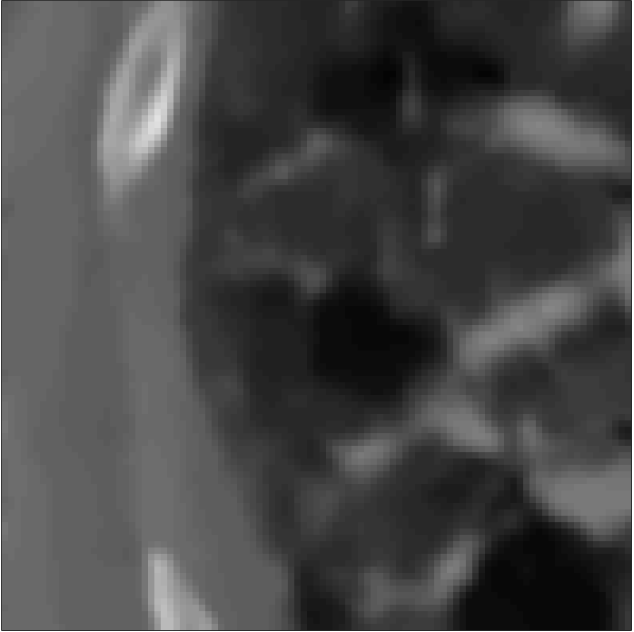}
    }
    \vspace{-0.02\linewidth}
  \end{minipage}
  \begin{minipage}{1.0\textwidth}
    \centering
    \subfloat
    {
      \includegraphics[width=0.3\linewidth,
        angle=0]{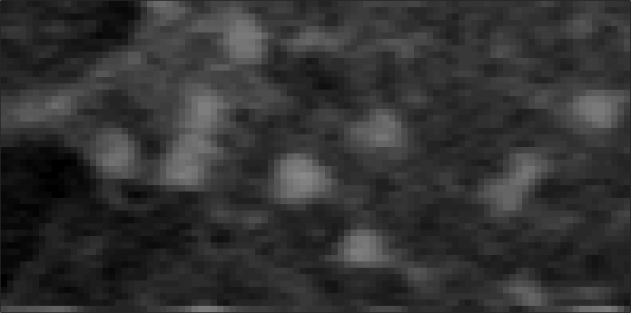}
    }
    \subfloat
    {
      \includegraphics[width=0.3\linewidth,
        angle=0]{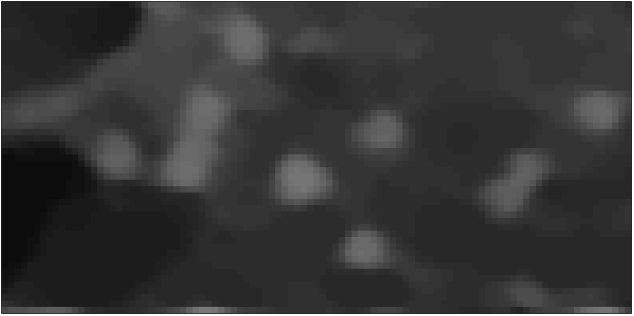}
    }
    \subfloat
    {
      \includegraphics[width=0.3\linewidth,
        angle=0]{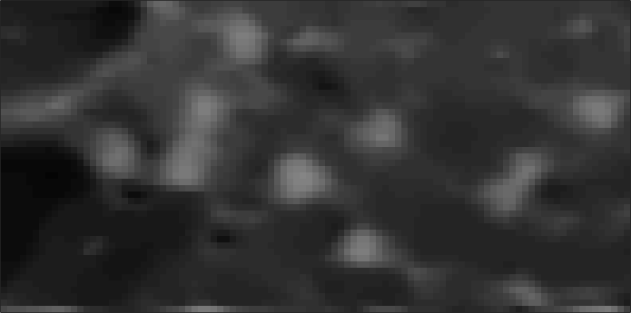}
    }
  \end{minipage}
  \caption{Visualization of computer tomography data of human lung tissue B via 3 groups of orthogonal slices. Row one, row two and row three present respectively the $xz$-slices, the $xy$-slices and the $y$-slices. Column (a) visualizes the raw data with some noise (Gaussian noise). Column (b) visualizes the data denoised with ROF model. Column (c) visualizes the data denoised with proposed multidimensional TV-Stokes model.}
  \label{fig:slicesb1}
\end{figure}

\begin{figure}[!htbp]
  \captionsetup[subfigure]{justification=centering}
  \centering
  \begin{minipage}{1.0\textwidth}
    \centering
    \subfloat
    {
      \includegraphics[width=0.3\linewidth,
        angle=0]{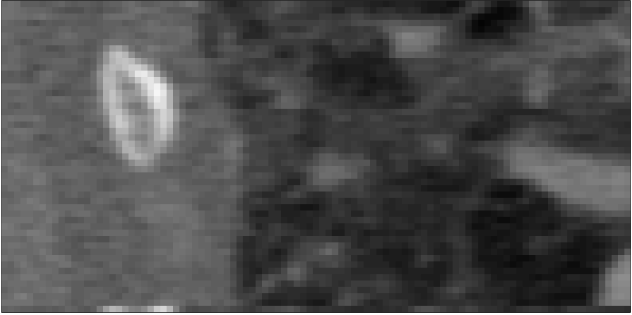}
    }
    \subfloat
    {
      \includegraphics[width=0.3\linewidth,
        angle=0]{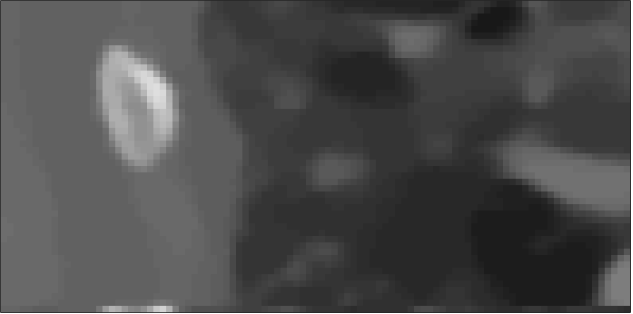}
    }
    \subfloat
    {
      \includegraphics[width=0.3\linewidth,
        angle=0]{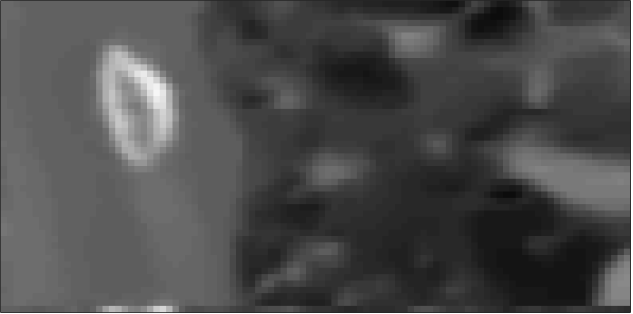}
    }
    \vspace{-0.02\linewidth}
  \end{minipage}
  \begin{minipage}{1.0\textwidth}
    \centering
    \subfloat
    {
      \includegraphics[width=0.3\linewidth,
        angle=0]{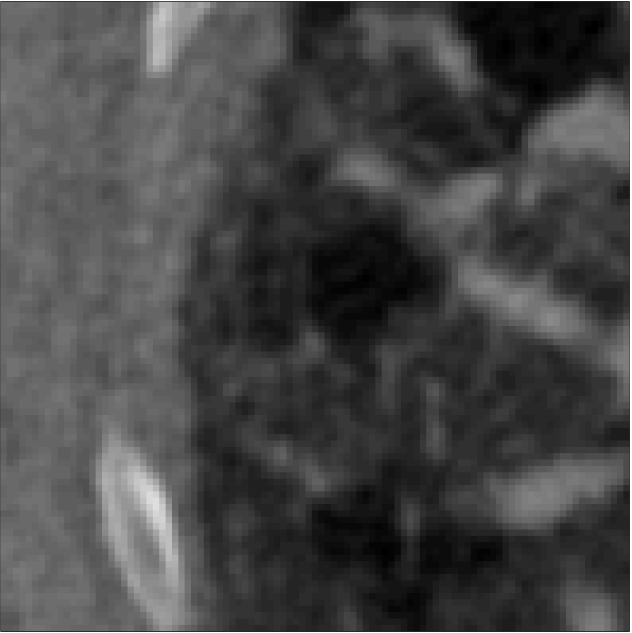}
    }
    \subfloat
    {
      \includegraphics[width=0.3\linewidth,
        angle=0]{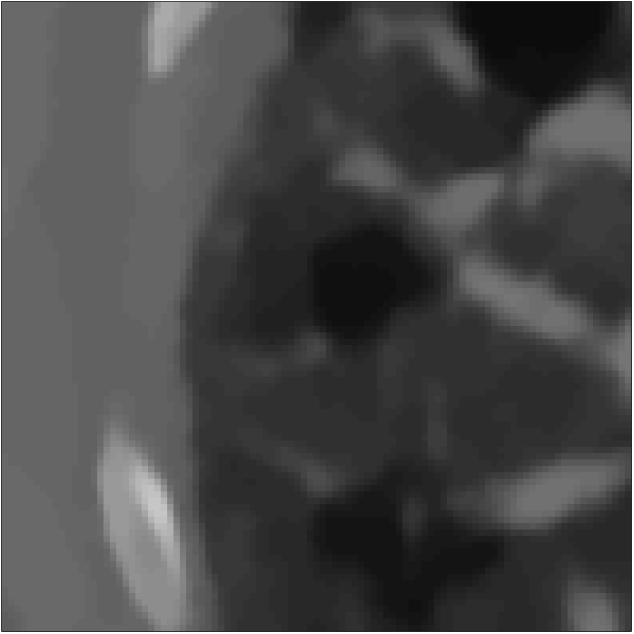}
    }
    \subfloat
    {
      \includegraphics[width=0.3\linewidth,
        angle=0]{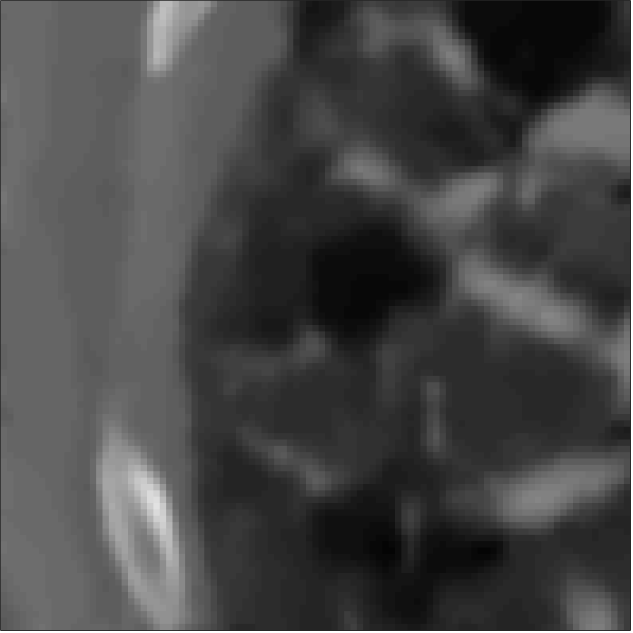}
    }
    \vspace{-0.02\linewidth}
  \end{minipage}
  \begin{minipage}{1.0\textwidth}
    \centering
    \subfloat
    {
      \includegraphics[width=0.3\linewidth,
        angle=0]{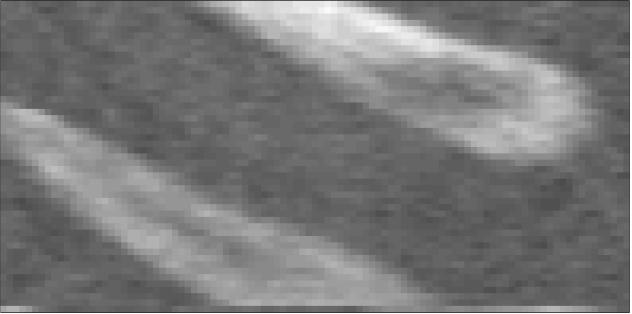}
    }
    \subfloat
    {
      \includegraphics[width=0.3\linewidth,
        angle=0]{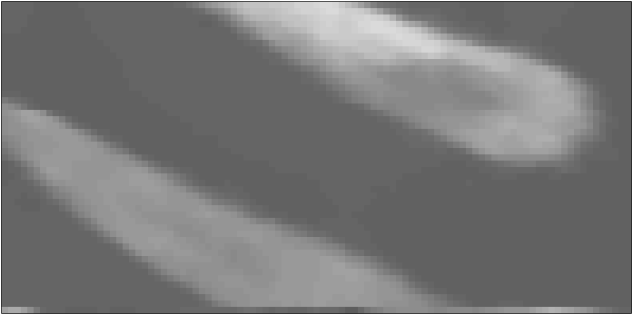}
    }
    \subfloat
    {
      \includegraphics[width=0.3\linewidth,
        angle=0]{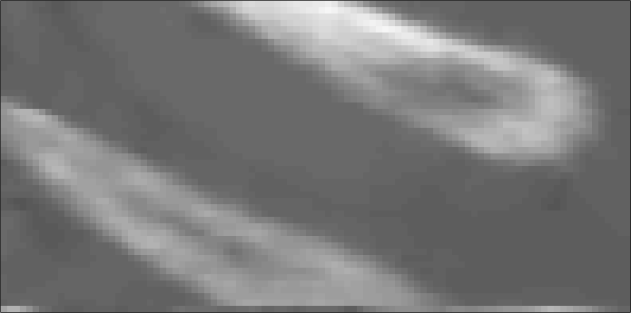}
    }
  \end{minipage}
  \caption{Visualization of computer tomography data of human lung tissue B via 3 groups of orthogonal slices. In contrast to \cref{fig:slicesb1}, different observation slices are presented. Row one, row two and row three present respectively the $xz$-slices, the $xy$-slices and the $yz$-slices. Column (a) visualizes the raw data with some noise (Gaussian noise). Column (b) visualizes the data denoised with ROF model. Column (c) visualizes the data denoised with proposed multidimensional TV-Stokes model.}
  \label{fig:slicesb2}
\end{figure}

\begin{figure}[!htbp]
  \captionsetup[subfigure]{justification=centering}
  \centering
  \begin{minipage}{1.0\textwidth}
    \centering
    \subfloat
    {
      \includegraphics[width=0.3\linewidth,
        height=0.3\linewidth,
        angle=0]{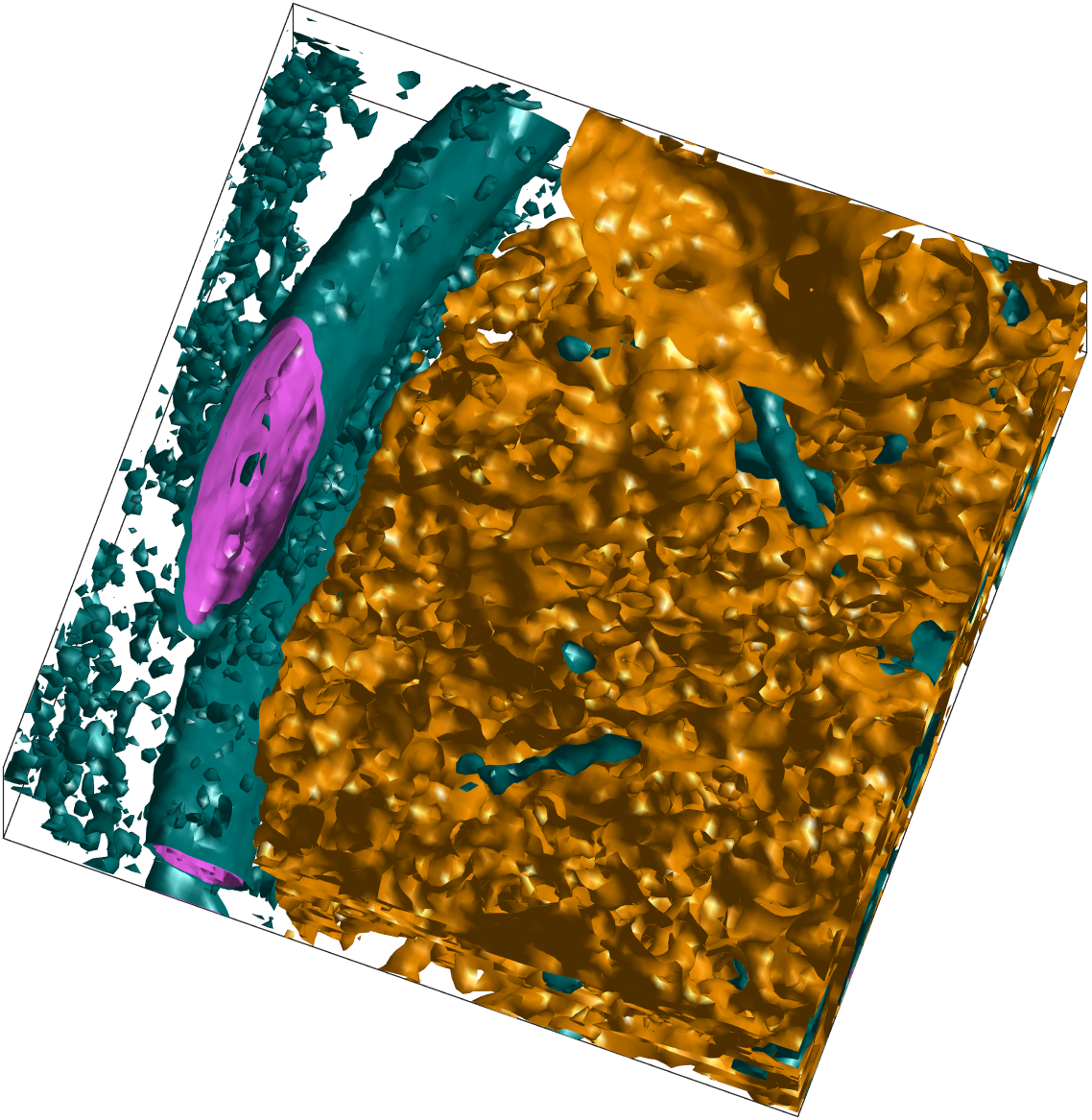}
    }
    \subfloat
    {
      \includegraphics[width=0.3\linewidth,
        height=0.3\linewidth,
        angle=0]{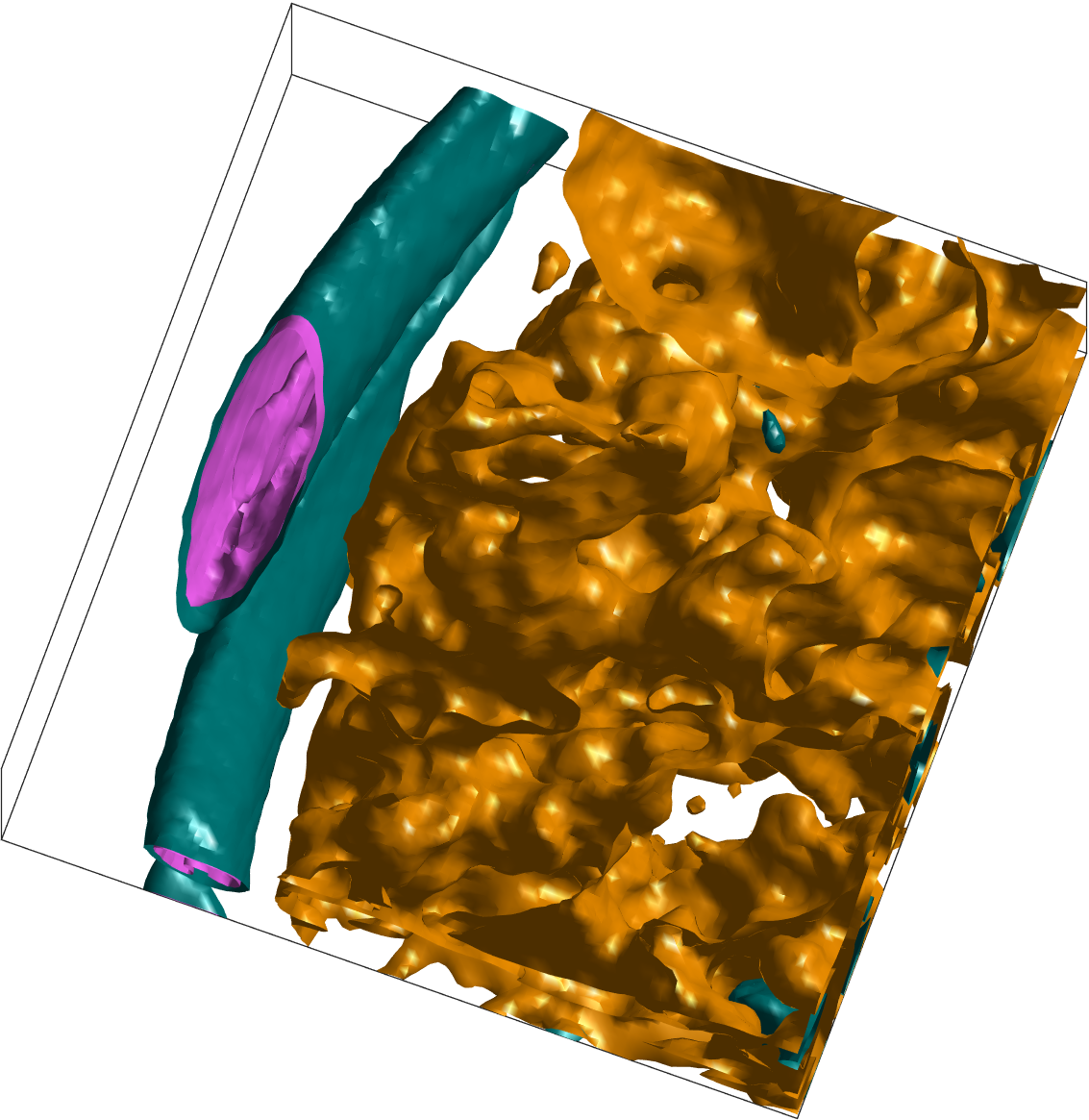}
    }
    \subfloat
    {
      \includegraphics[width=0.3\linewidth,
        height=0.3\linewidth,
        angle=0]{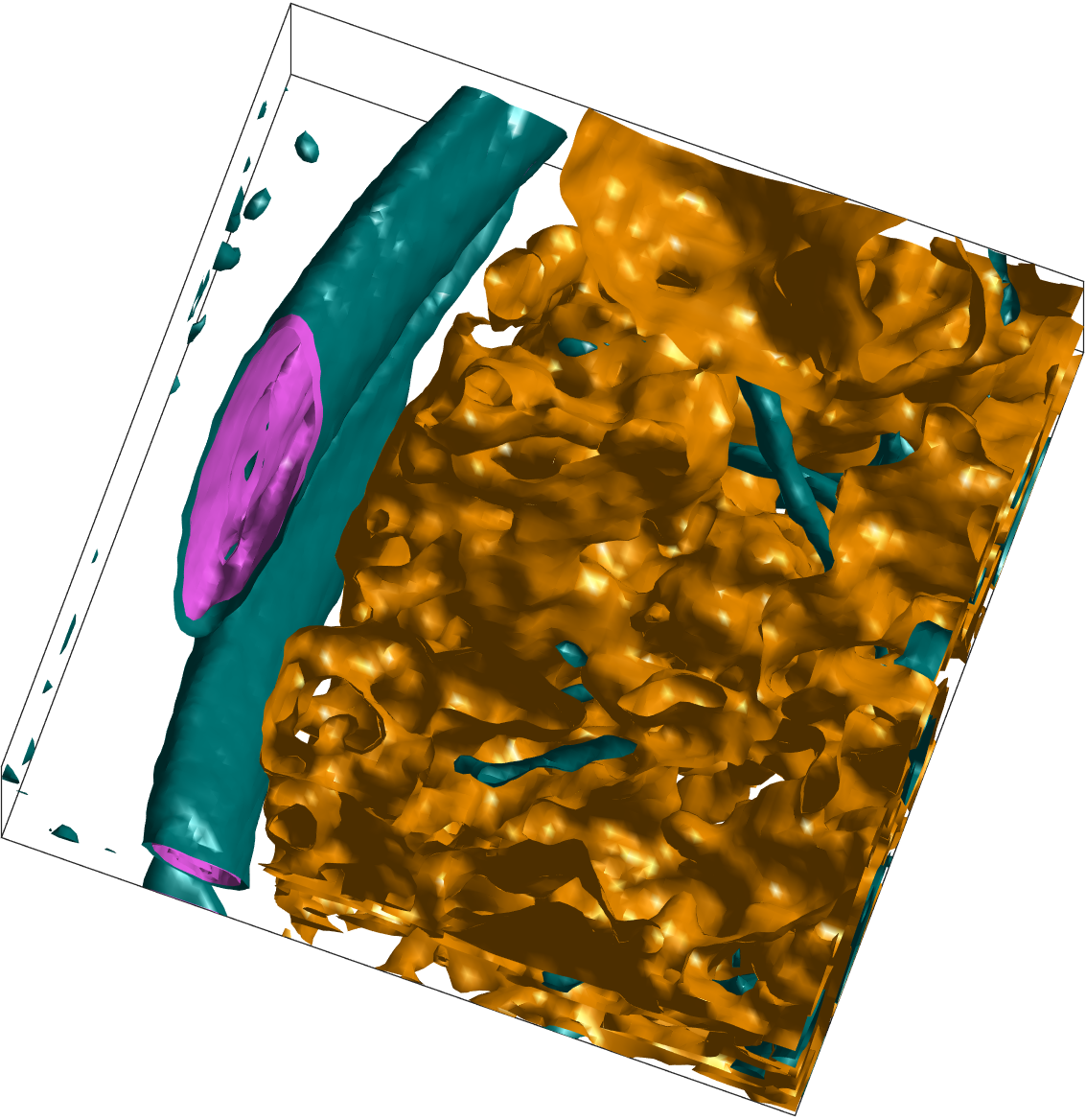}
    }
    \vspace{-0.02\linewidth}
  \end{minipage}
  \begin{minipage}{1.0\textwidth}
    \centering
    \subfloat
    {
      \includegraphics[width=0.3\linewidth,
        height=0.3\linewidth,
        angle=0]{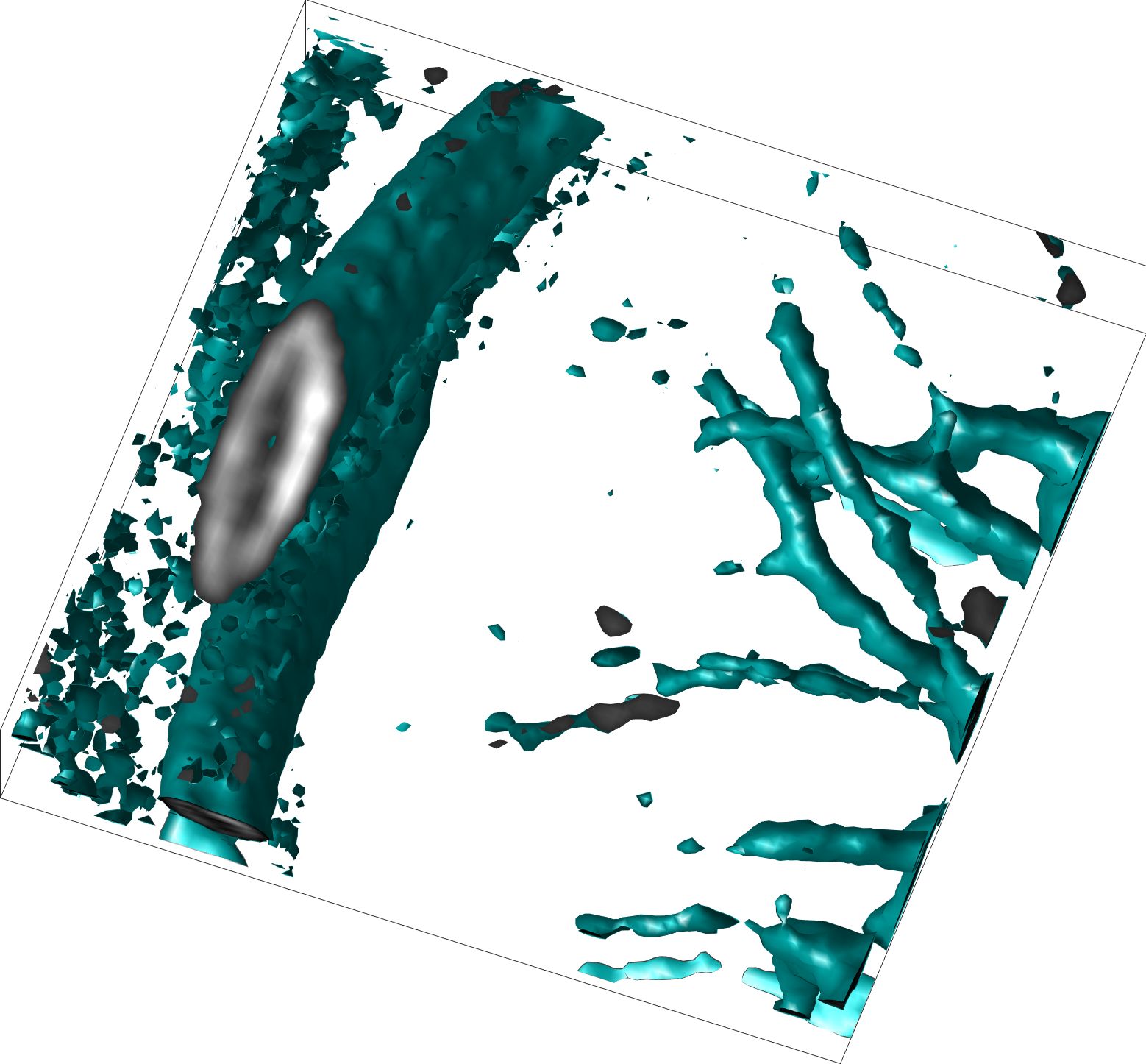}
    }
    \subfloat
    {
      \includegraphics[width=0.3\linewidth,
        height=0.3\linewidth,
        angle=0]{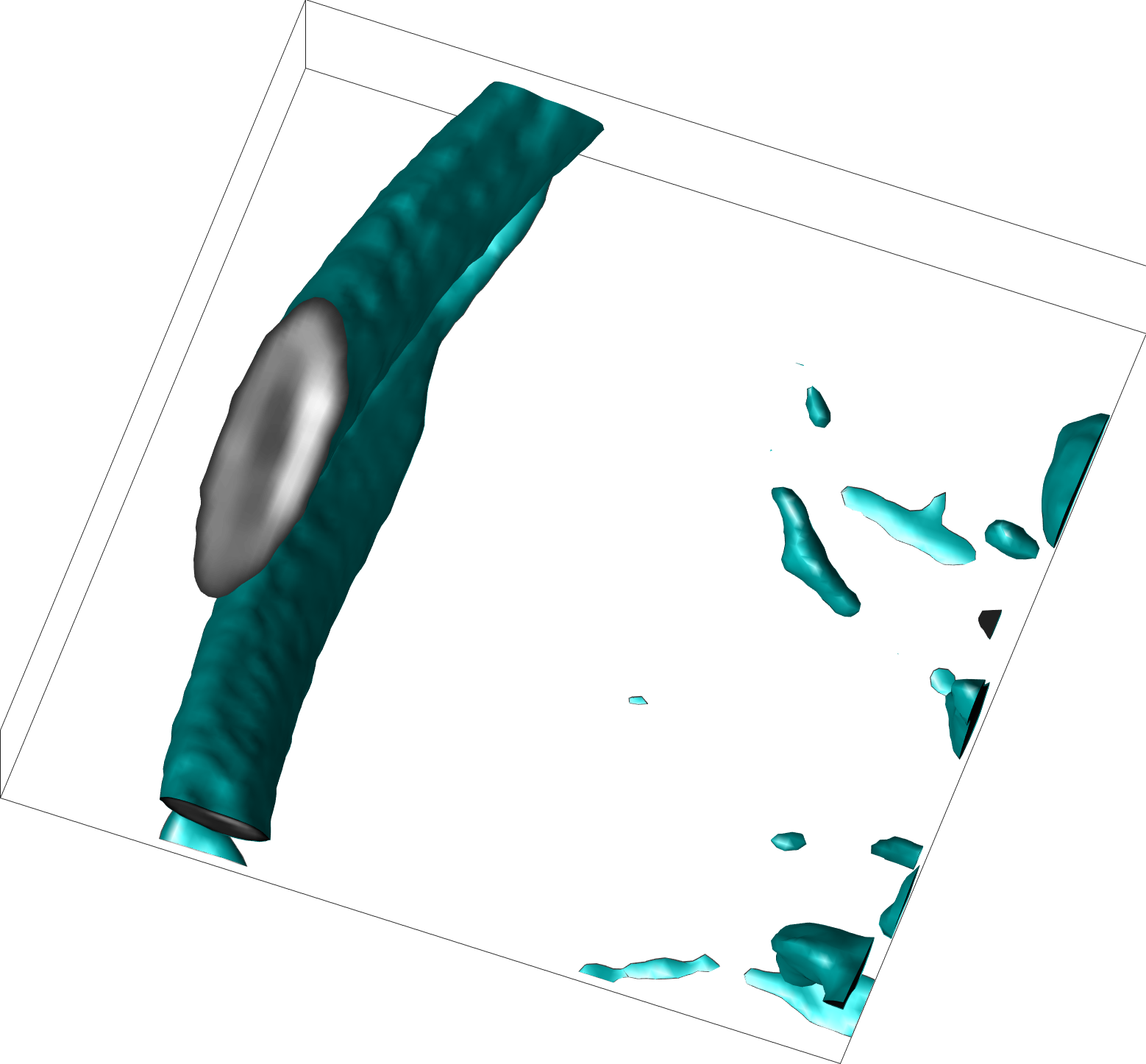}
    }
    \subfloat
    {
      \includegraphics[width=0.3\linewidth,
        height=0.3\linewidth,
        angle=0]{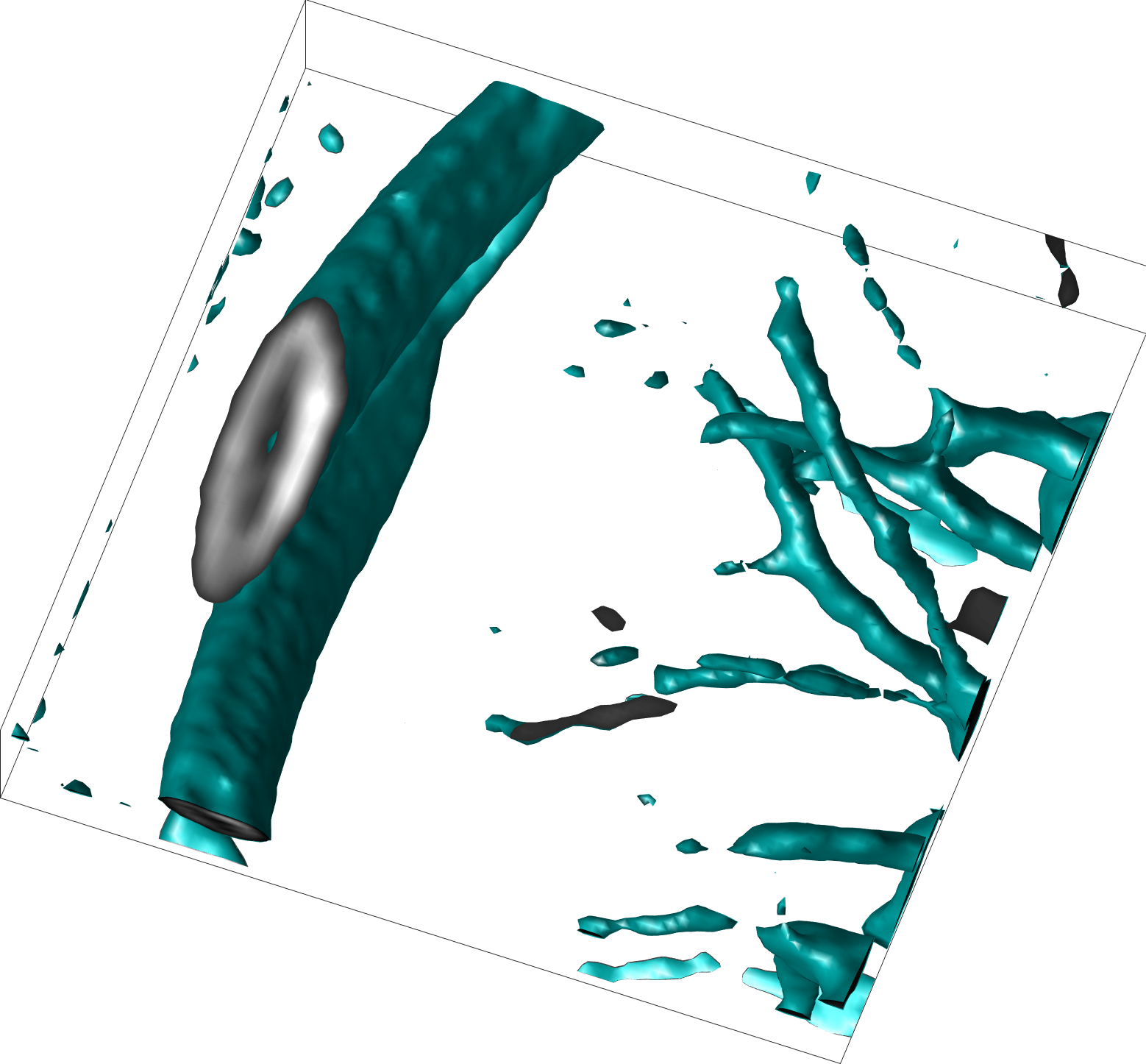}
    }
  \end{minipage}
  \caption{Isosurface show of computer tomography data of human lung tissue B. (a) raw data with some noise (Gaussian noise). (b) data denoised with ROF model. (c) data denoised with proposed multidimensional TV-Stokes model. In row two, only tube-like structures are presented.}
  \label{fig:isosurfaceb}
\end{figure}

Another experiment is performed on 2d dynamic data, where new 3d data are obtained by stacking all 2d data along temporal dimension. \cref{fig:hepburn1959} and \cref{fig:escalator} show the results of using proposed multidimensional TV-Stokes on movie / monitoring video denoising. In \cref{fig:hepburn1959}, smooth cheek and sharp edges are well balanced, showing in row two and row four. In escalator case, cf. \cref{fig:escalator}, shape edges of structures and digitals are preserved, showing in row two and row four.

\begin{figure}[!htbp]
  \captionsetup[subfigure]{justification=centering}
  \centering
  \begin{minipage}{1.0\textwidth}
    \centering
    \subfloat
    {
      \includegraphics[width=0.148\linewidth,
        height=0.12\linewidth,
        angle=0]{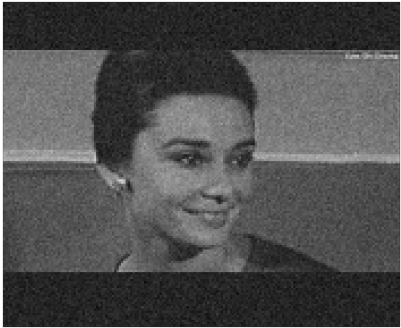}
    }
    \subfloat
    {
      \includegraphics[width=0.148\linewidth,
        height=0.12\linewidth,
        angle=0]{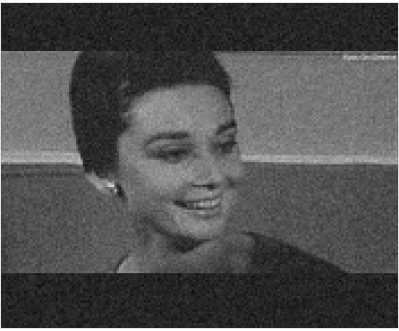}
    }
    \subfloat
    {
      \includegraphics[width=0.148\linewidth,
        height=0.12\linewidth,
        angle=0]{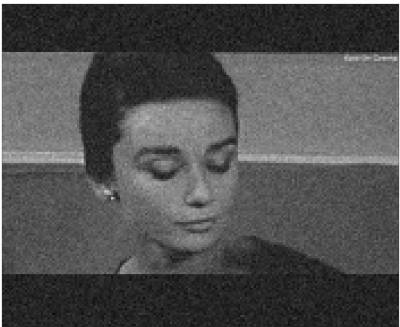}
    }
    \subfloat
    {
      \includegraphics[width=0.148\linewidth,
        height=0.12\linewidth,
        angle=0]{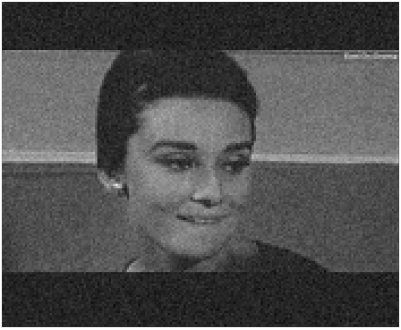}
    }
    \subfloat
    {
      \includegraphics[width=0.148\linewidth,
        height=0.12\linewidth,
        angle=0]{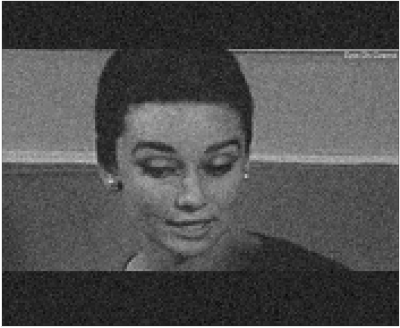}
    }
    \subfloat
    {
      \includegraphics[width=0.148\linewidth,
        height=0.12\linewidth,
        angle=0]{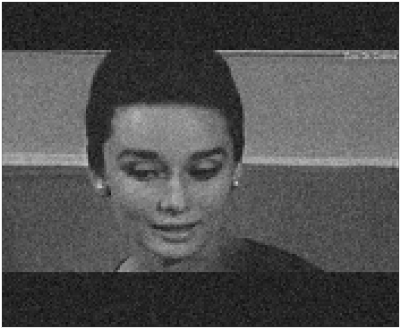}
    }
    \vspace{-0.02\linewidth}
  \end{minipage}
  \begin{minipage}{1.0\textwidth}
    \centering
    \subfloat
    {
      \includegraphics[width=0.148\linewidth,
        height=0.12\linewidth,
        angle=0]{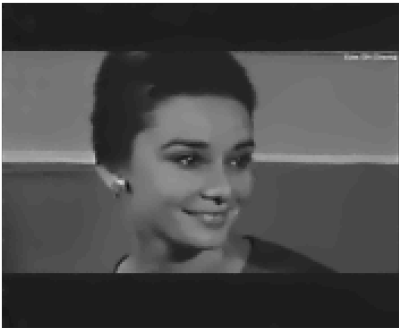}
    }
    \subfloat
    {
      \includegraphics[width=0.148\linewidth,
        height=0.12\linewidth,
        angle=0]{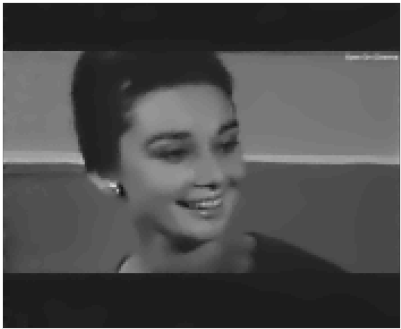}
    }
    \subfloat
    {
      \includegraphics[width=0.148\linewidth,
        height=0.12\linewidth,
        angle=0]{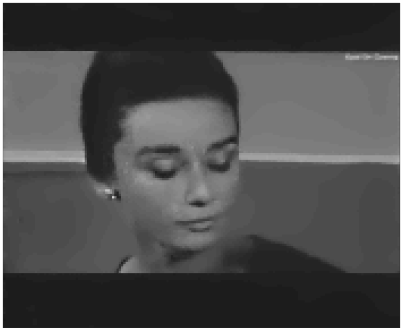}
    }
    \subfloat
    {
      \includegraphics[width=0.148\linewidth,
        height=0.12\linewidth,
        angle=0]{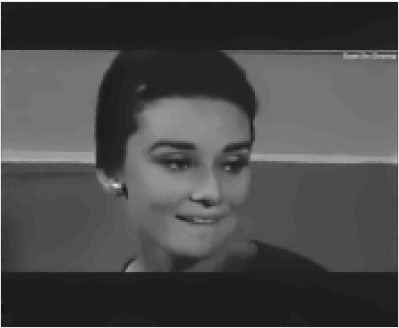}
    }
    \subfloat
    {
      \includegraphics[width=0.148\linewidth,
        height=0.12\linewidth,
        angle=0]{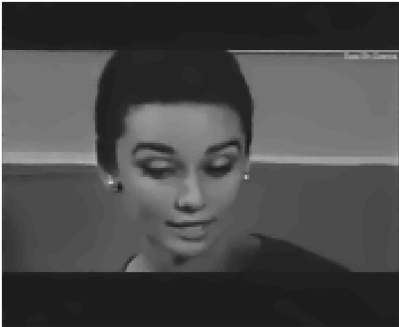}
    }
    \subfloat
    {
      \includegraphics[width=0.148\linewidth,
        height=0.12\linewidth,
        angle=0]{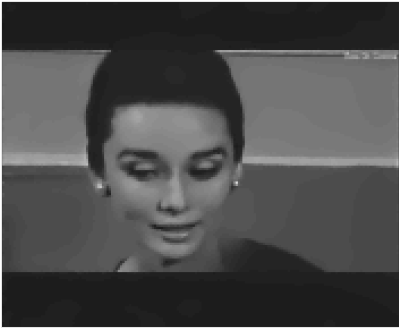}
    }
    \vspace{-0.02\linewidth}
  \end{minipage}
  \begin{minipage}{1.0\textwidth}
    \centering
    \subfloat
    {
      \includegraphics[width=0.148\linewidth,
        height=0.12\linewidth,
        angle=0]{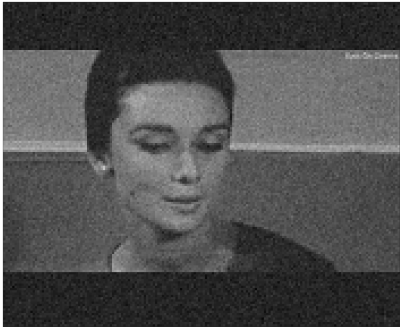}
    }
    \subfloat
    {
      \includegraphics[width=0.148\linewidth,
        height=0.12\linewidth,
        angle=0]{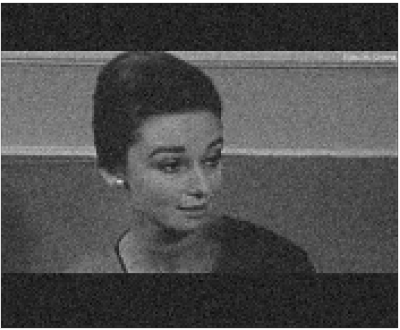}
    }
    \subfloat
    {
      \includegraphics[width=0.148\linewidth,
        height=0.12\linewidth,
        angle=0]{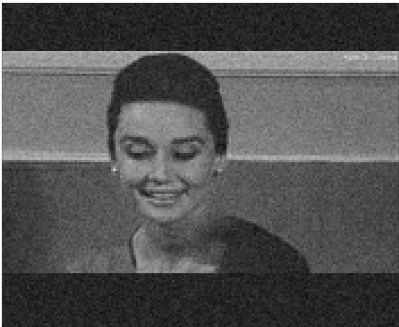}
    }
    \subfloat
    {
      \includegraphics[width=0.148\linewidth,
        height=0.12\linewidth,
        angle=0]{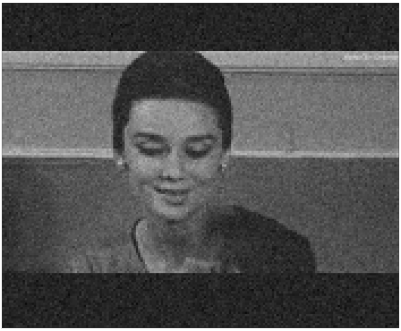}
    }
    \subfloat
    {
      \includegraphics[width=0.148\linewidth,
        height=0.12\linewidth,
        angle=0]{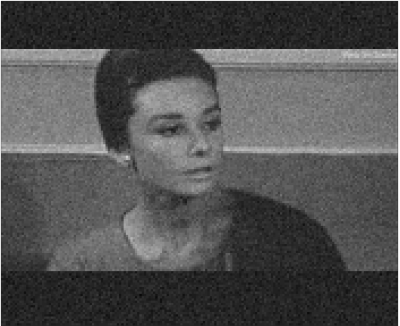}
    }
    \subfloat
    {
      \includegraphics[width=0.148\linewidth,
        height=0.12\linewidth,
        angle=0]{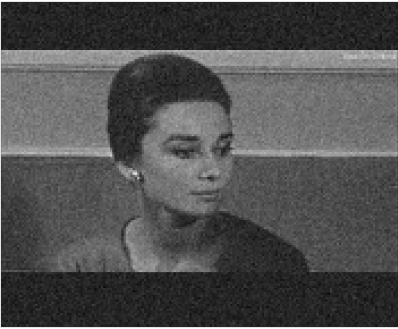}
    }
    \vspace{-0.02\linewidth}
  \end{minipage}
  \begin{minipage}{1.0\textwidth}
    \centering
    \subfloat
    {
      \includegraphics[width=0.148\linewidth,
        height=0.12\linewidth,
        angle=0]{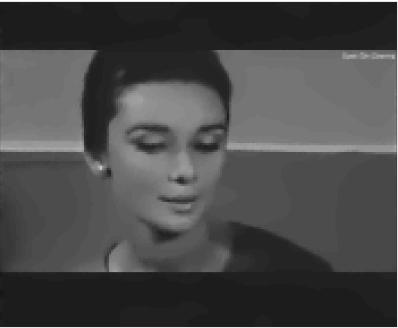}
    }
    \subfloat
    {
      \includegraphics[width=0.148\linewidth,
        height=0.12\linewidth,
        angle=0]{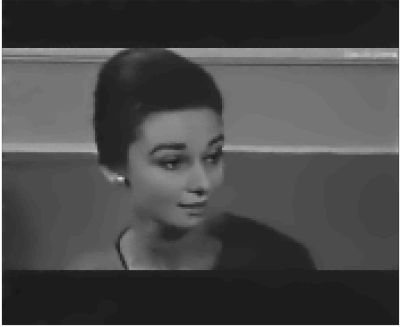}
    }
    \subfloat
    {
      \includegraphics[width=0.148\linewidth,
        height=0.12\linewidth,
        angle=0]{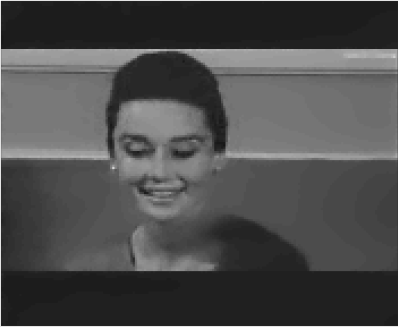}
    }
    \subfloat
    {
      \includegraphics[width=0.148\linewidth,
        height=0.12\linewidth,
        angle=0]{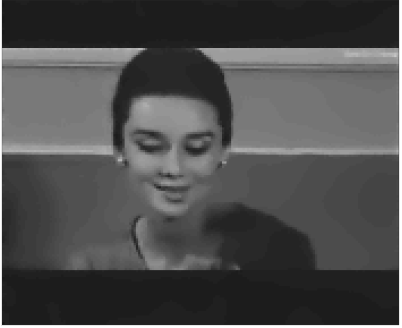}
    }
    \subfloat
    {
      \includegraphics[width=0.148\linewidth,
        height=0.12\linewidth,
        angle=0]{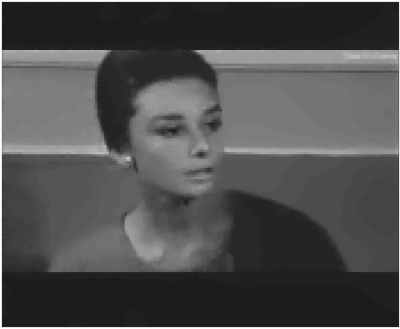}
    }
    \subfloat
    {
      \includegraphics[width=0.148\linewidth,
        height=0.12\linewidth,
        angle=0]{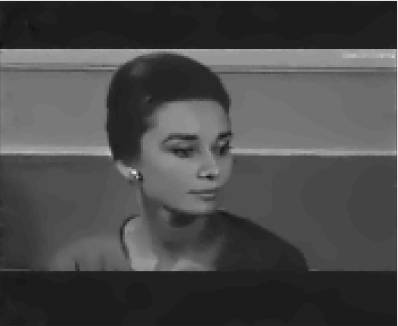}
    }
  \end{minipage}
  \caption{Contrast of initial noisy movie and its denoised copy, by using proposed multidimensional TV-Stokes model. Row one and row three are sequences from initial movie. Row two and row four are sequences from denoised results.}
  \label{fig:hepburn1959}
\end{figure}

\begin{figure}[!htbp]
  \captionsetup[subfigure]{justification=centering}
  \centering
  \begin{minipage}{1.0\textwidth}
    \centering
    \subfloat
    {
      \includegraphics[width=0.18\linewidth,
        angle=0]{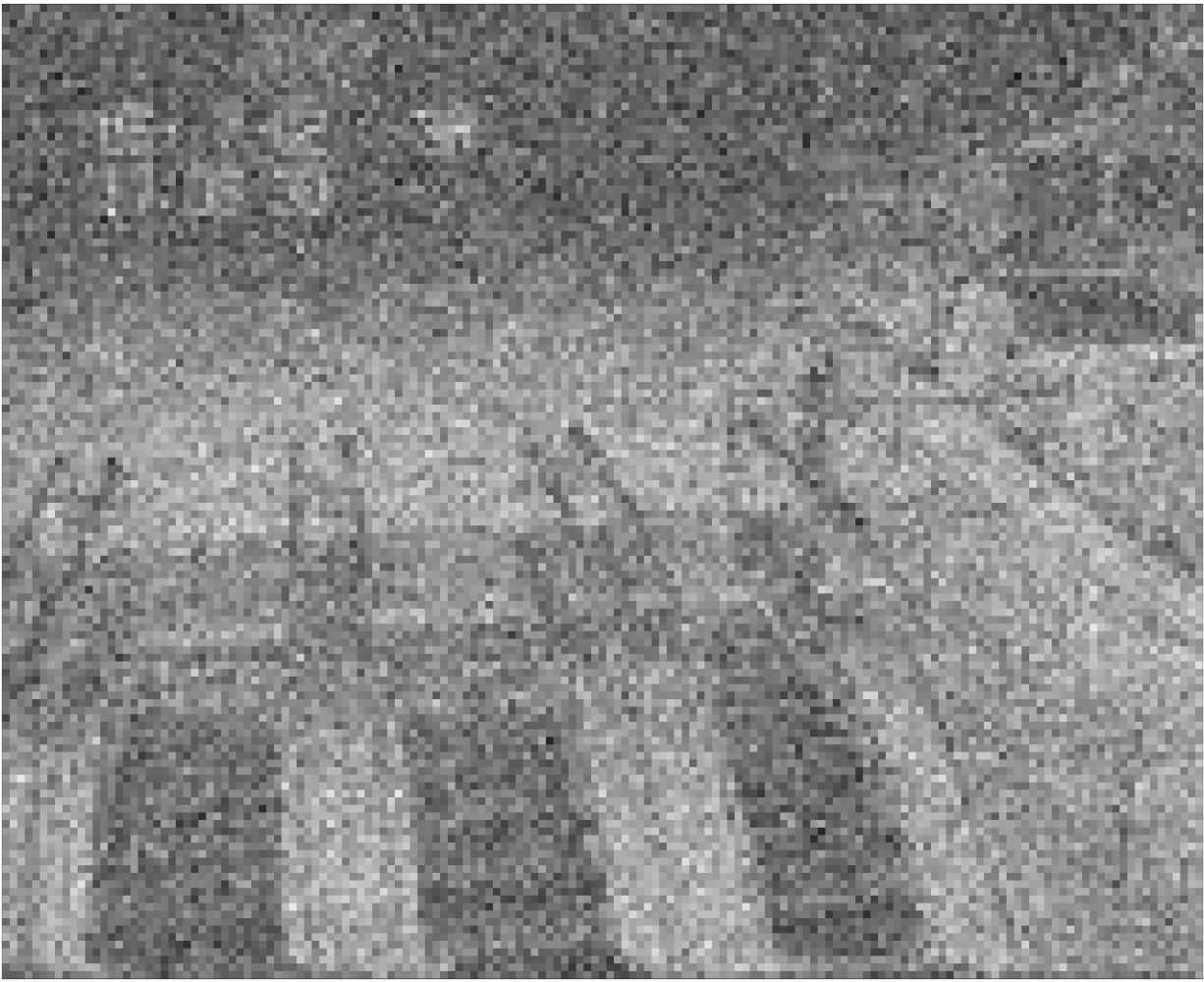}
    }
    \subfloat
    {
      \includegraphics[width=0.18\linewidth,
        angle=0]{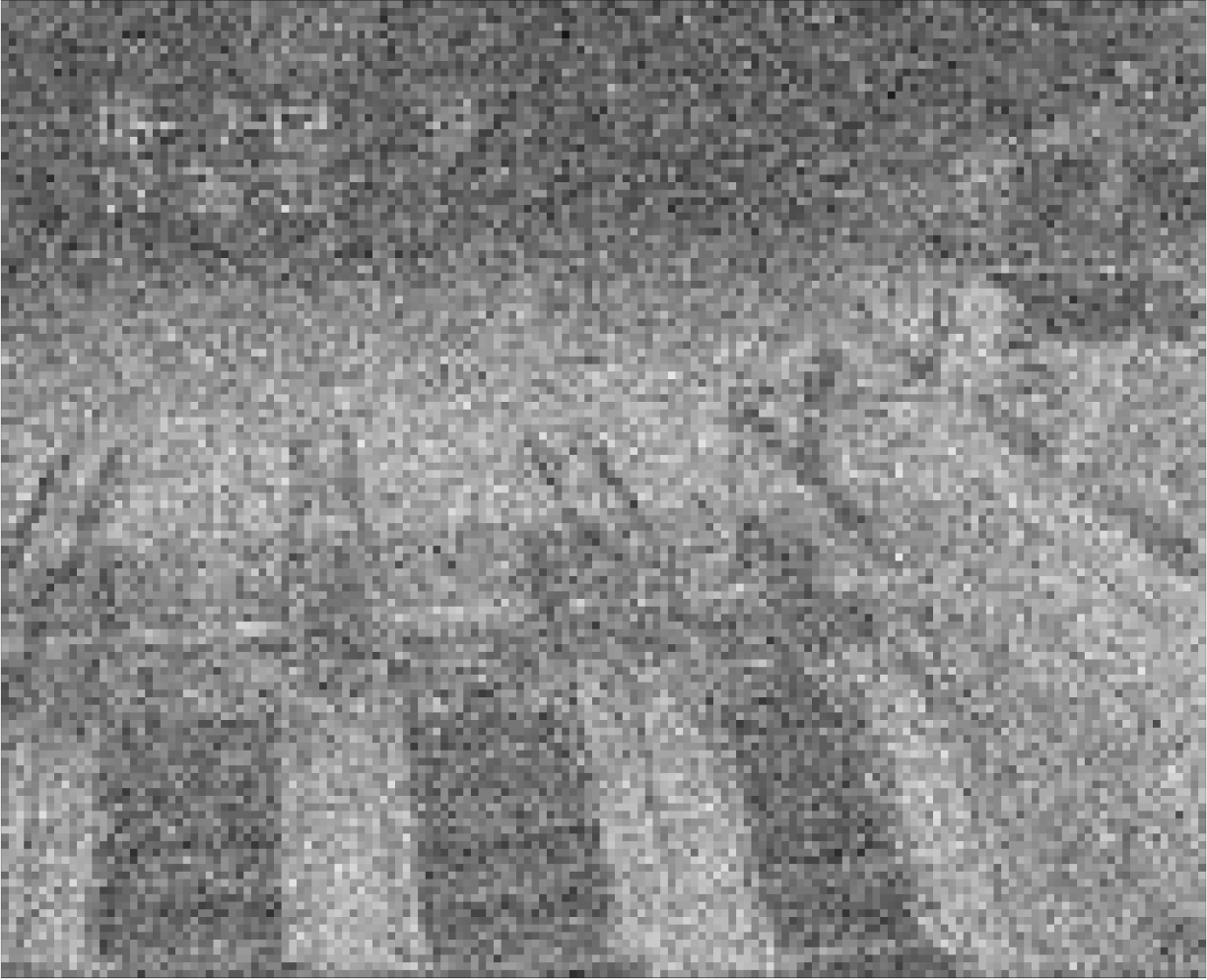}
    }
    \subfloat
    {
      \includegraphics[width=0.18\linewidth,
        angle=0]{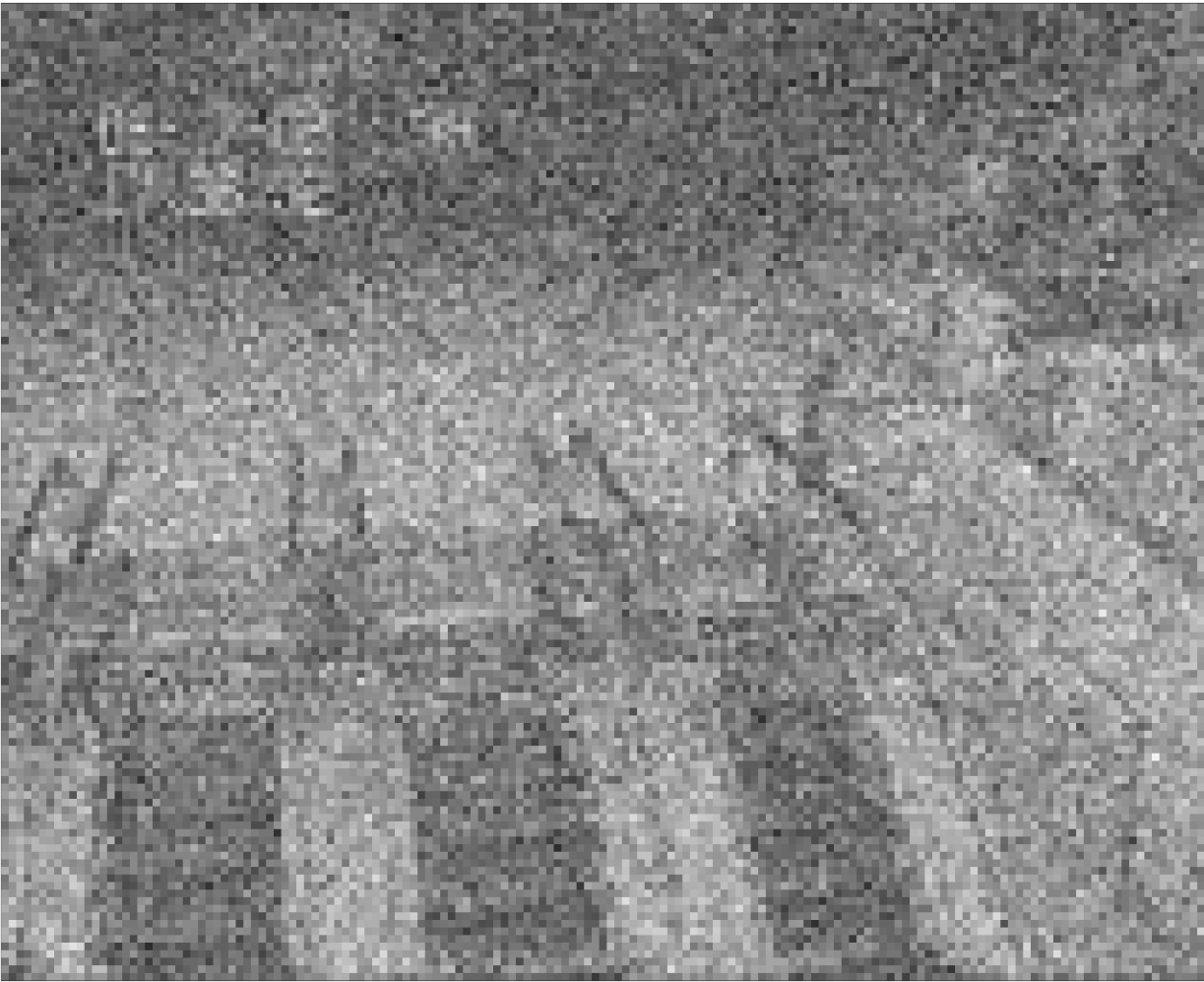}
    }
    \subfloat
    {
      \includegraphics[width=0.18\linewidth,
        angle=0]{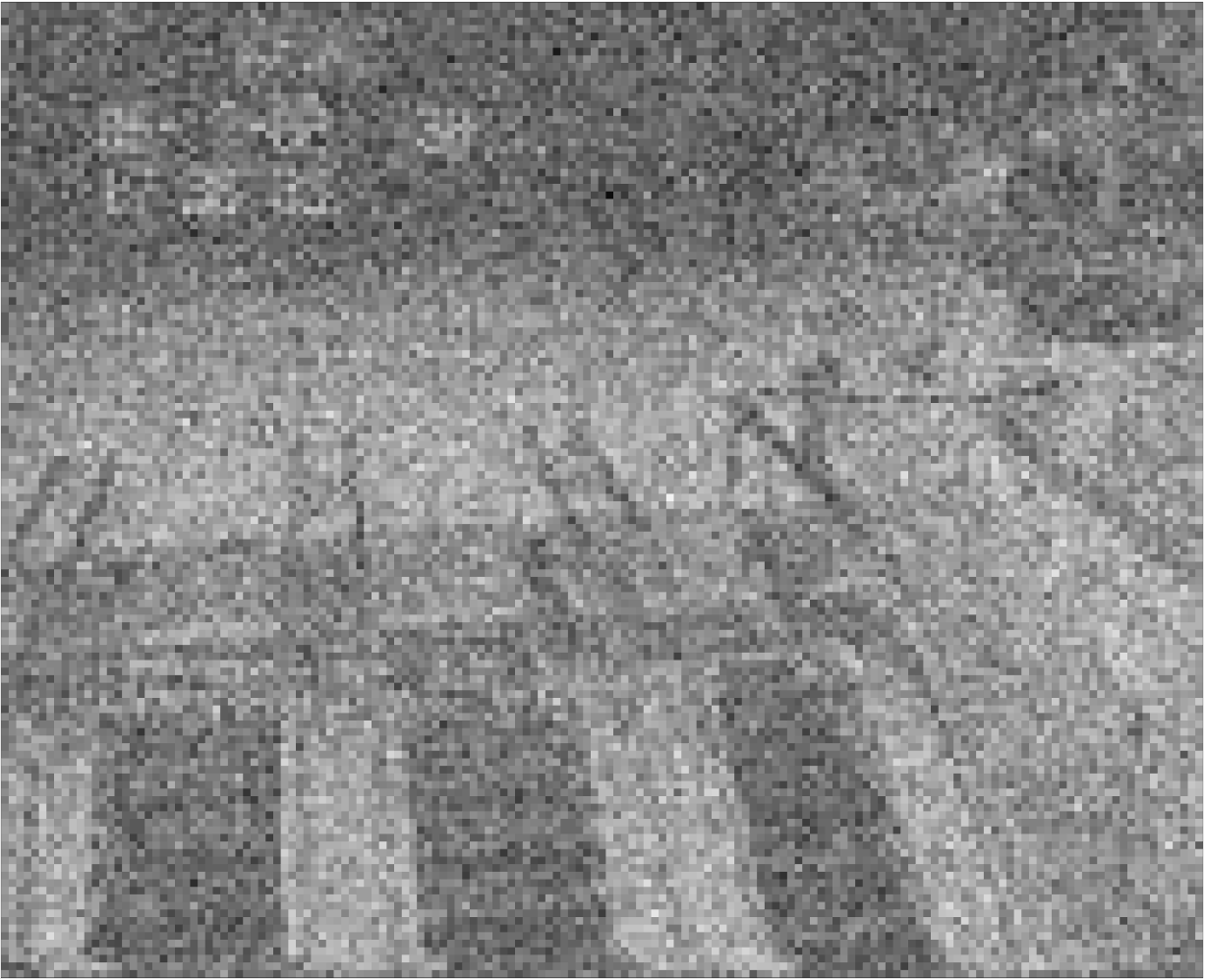}
    }
    \subfloat
    {
      \includegraphics[width=0.18\linewidth,
        angle=0]{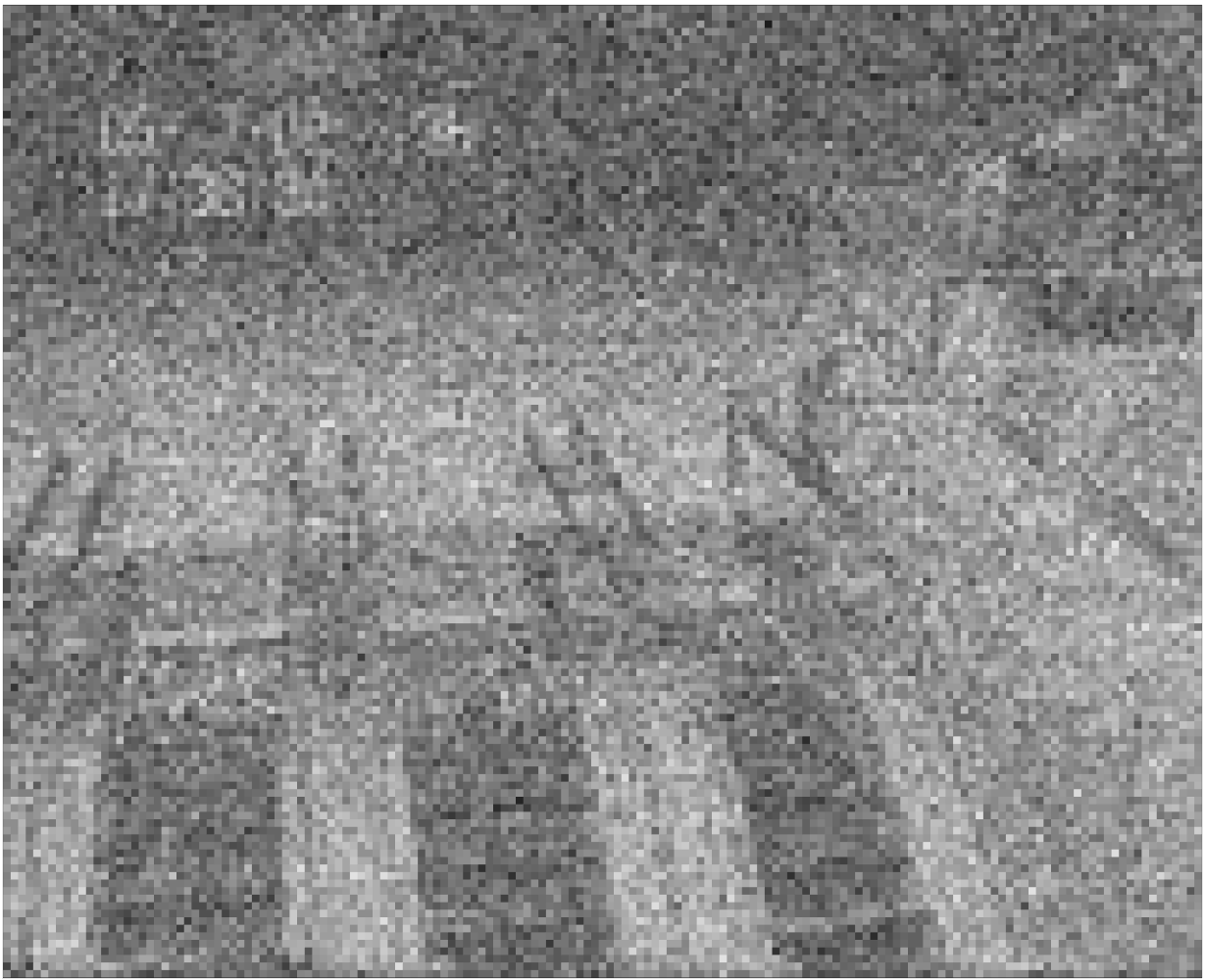}
    }
    \vspace{-0.02\linewidth}
  \end{minipage}
  \begin{minipage}{1.0\textwidth}
    \centering
    \subfloat
    {
      \includegraphics[width=0.18\linewidth,
        angle=0]{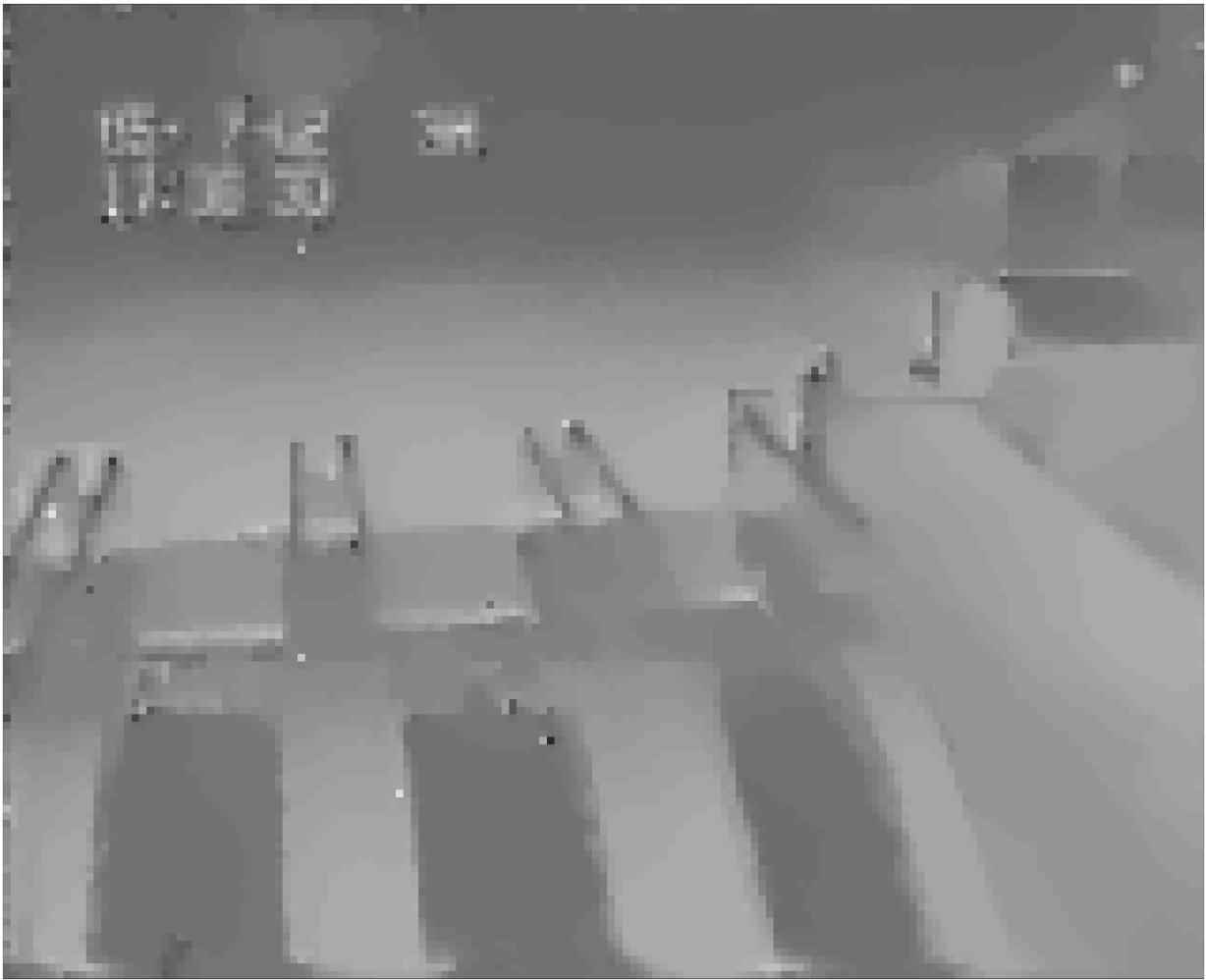}
    }
    \subfloat
    {
      \includegraphics[width=0.18\linewidth,
        angle=0]{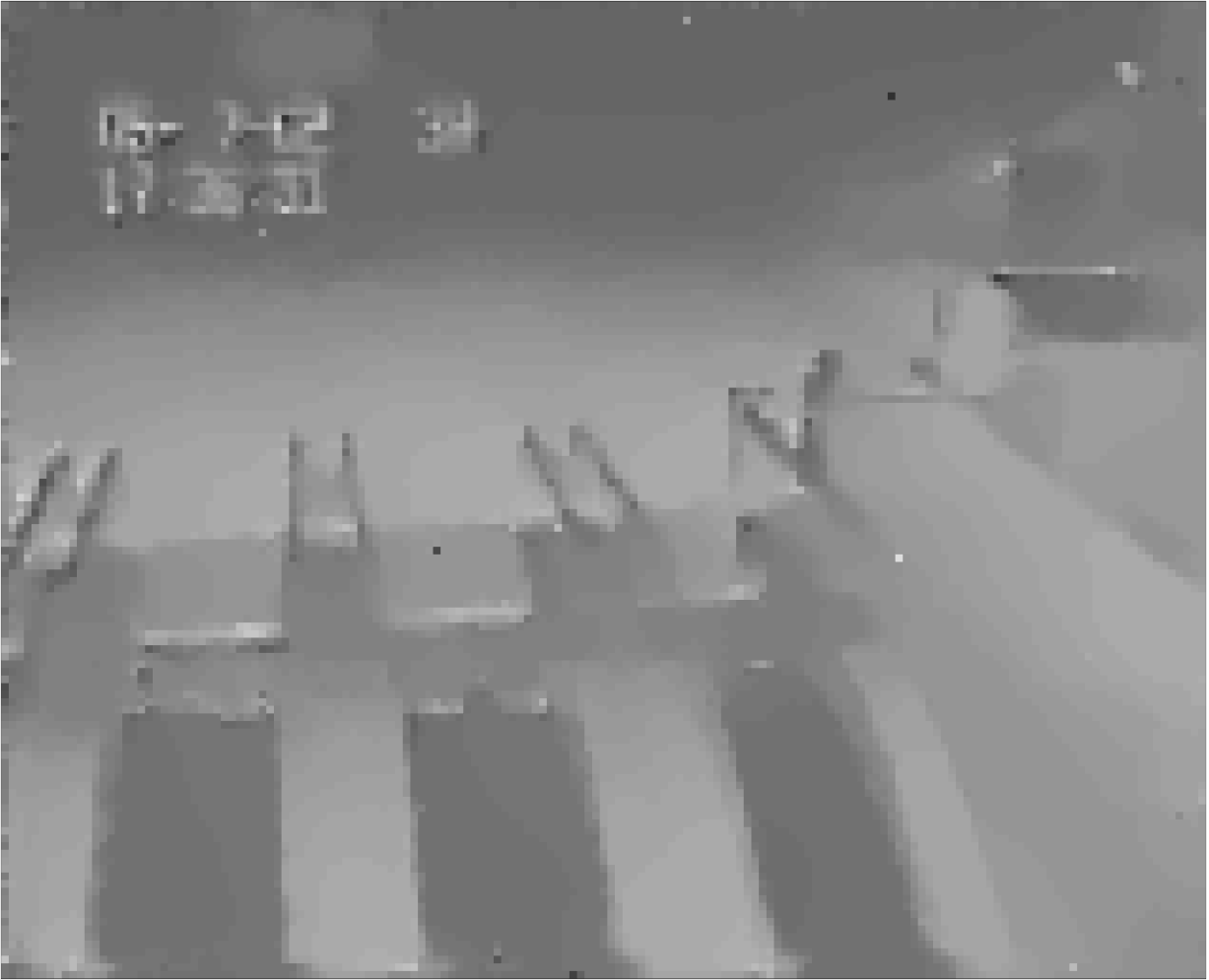}
    }
    \subfloat
    {
      \includegraphics[width=0.18\linewidth,
        angle=0]{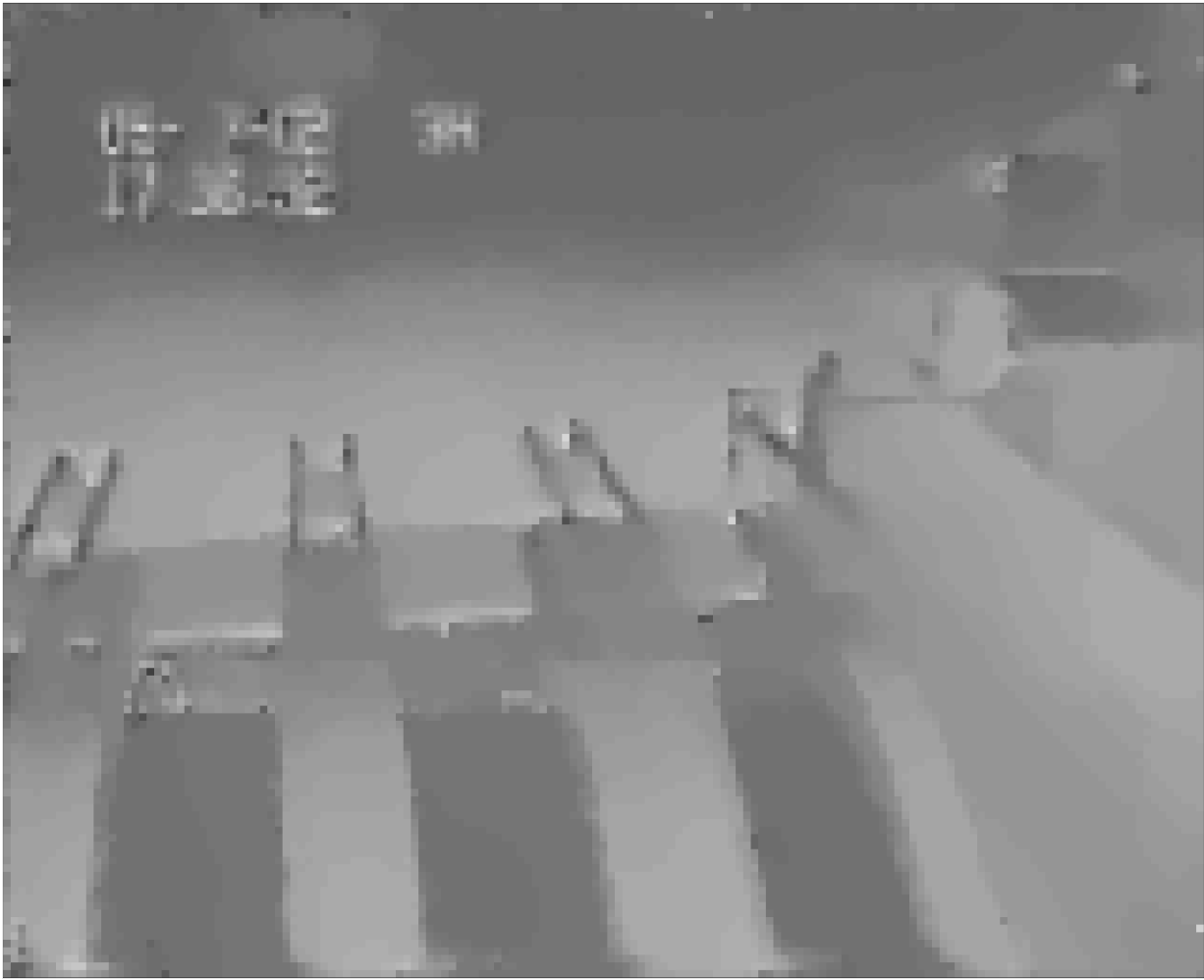}
    }
    \subfloat
    {
      \includegraphics[width=0.18\linewidth,
        angle=0]{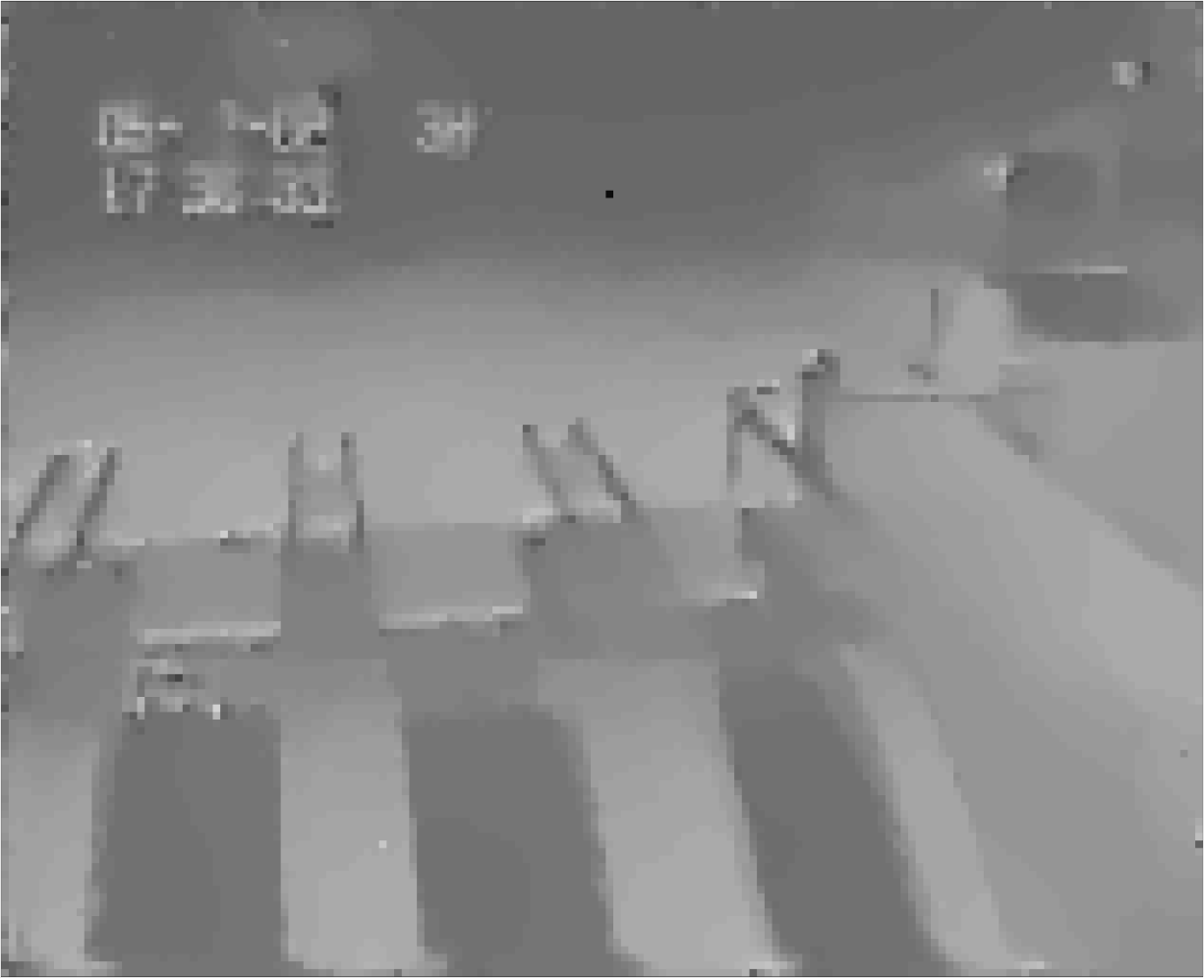}
    }
    \subfloat
    {
      \includegraphics[width=0.18\linewidth,
        angle=0]{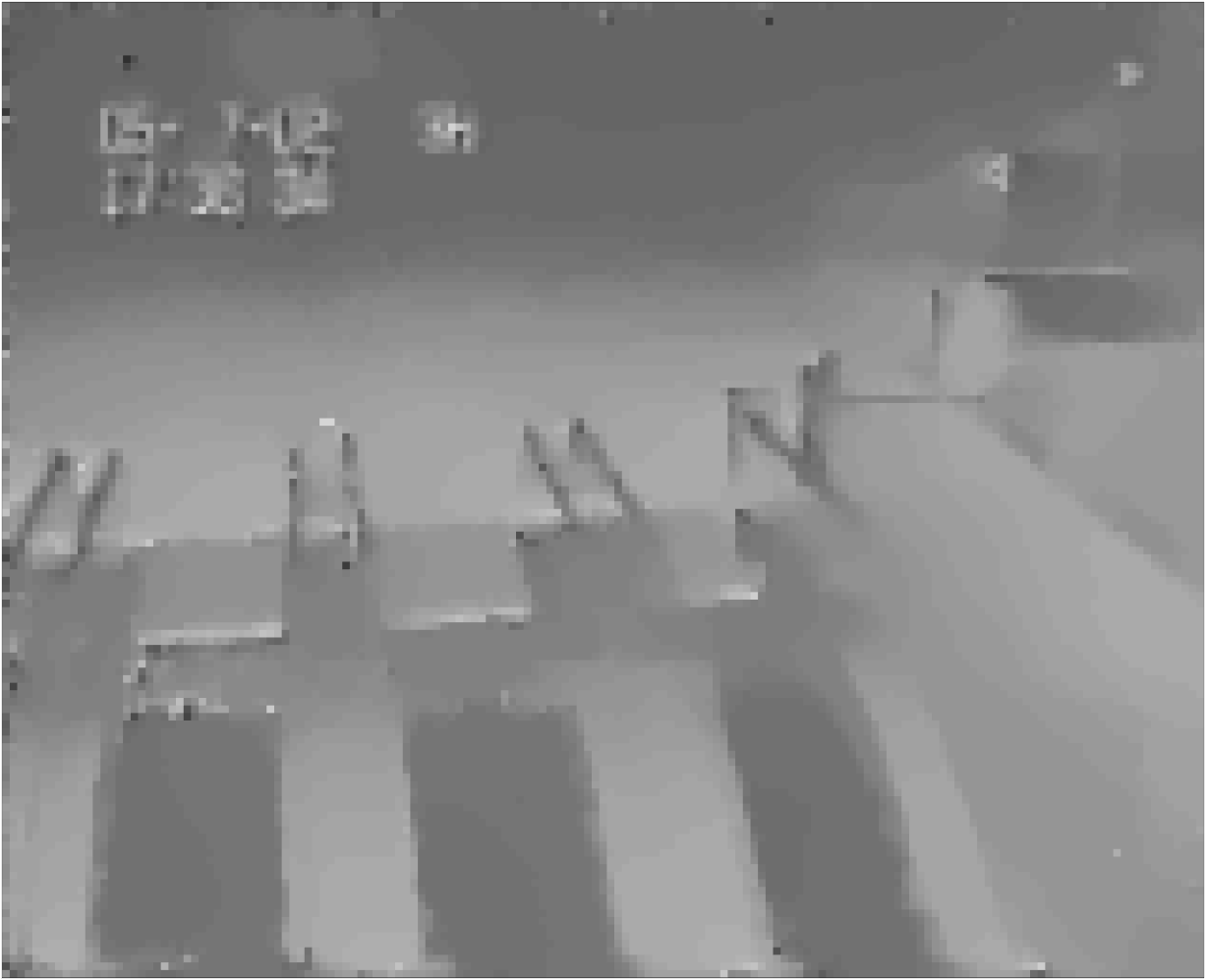}
    }
    \vspace{-0.02\linewidth}
  \end{minipage}
  \begin{minipage}{1.0\textwidth}
    \centering
    \subfloat
    {
      \includegraphics[width=0.18\linewidth,
        angle=0]{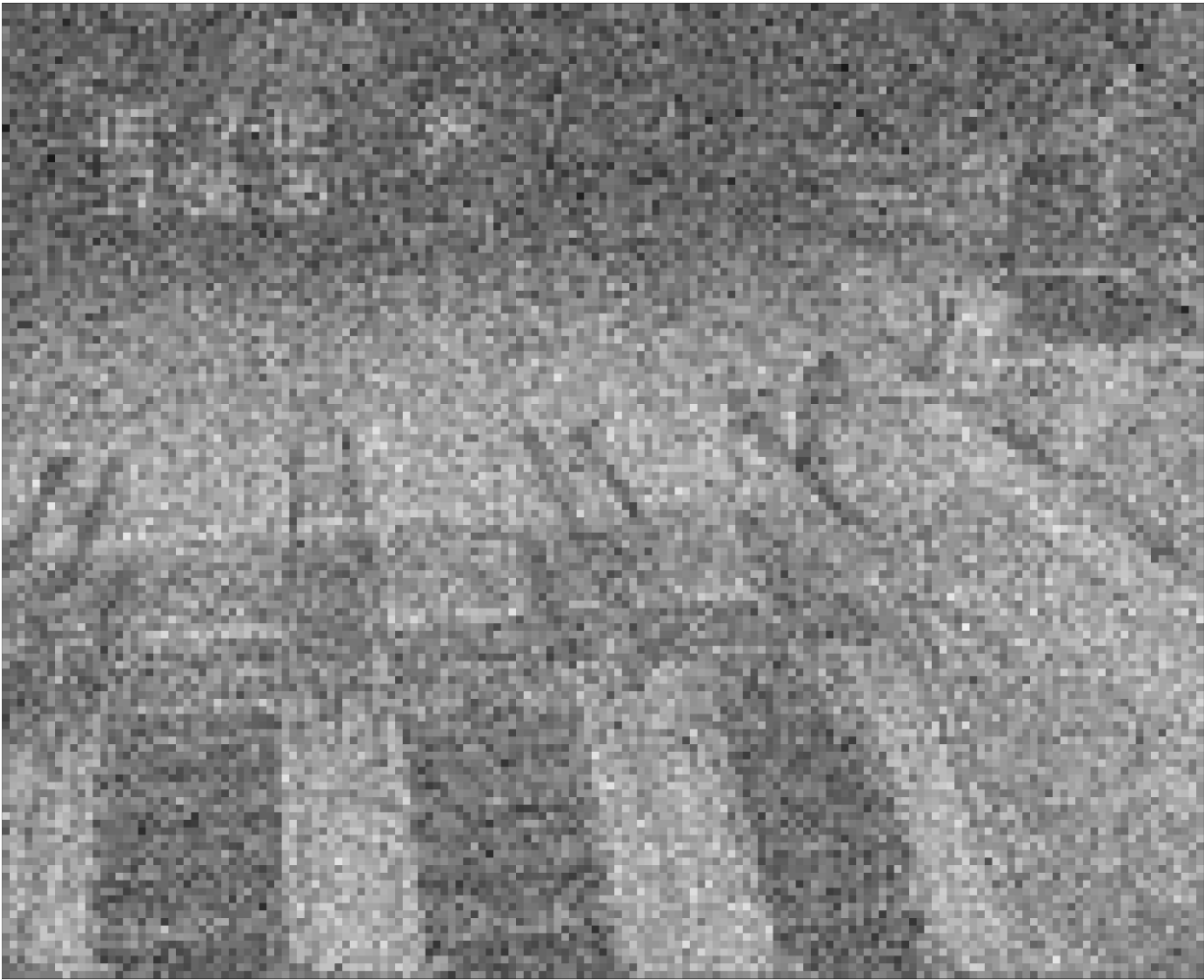}
    }
    \subfloat
    {
      \includegraphics[width=0.18\linewidth,
        angle=0]{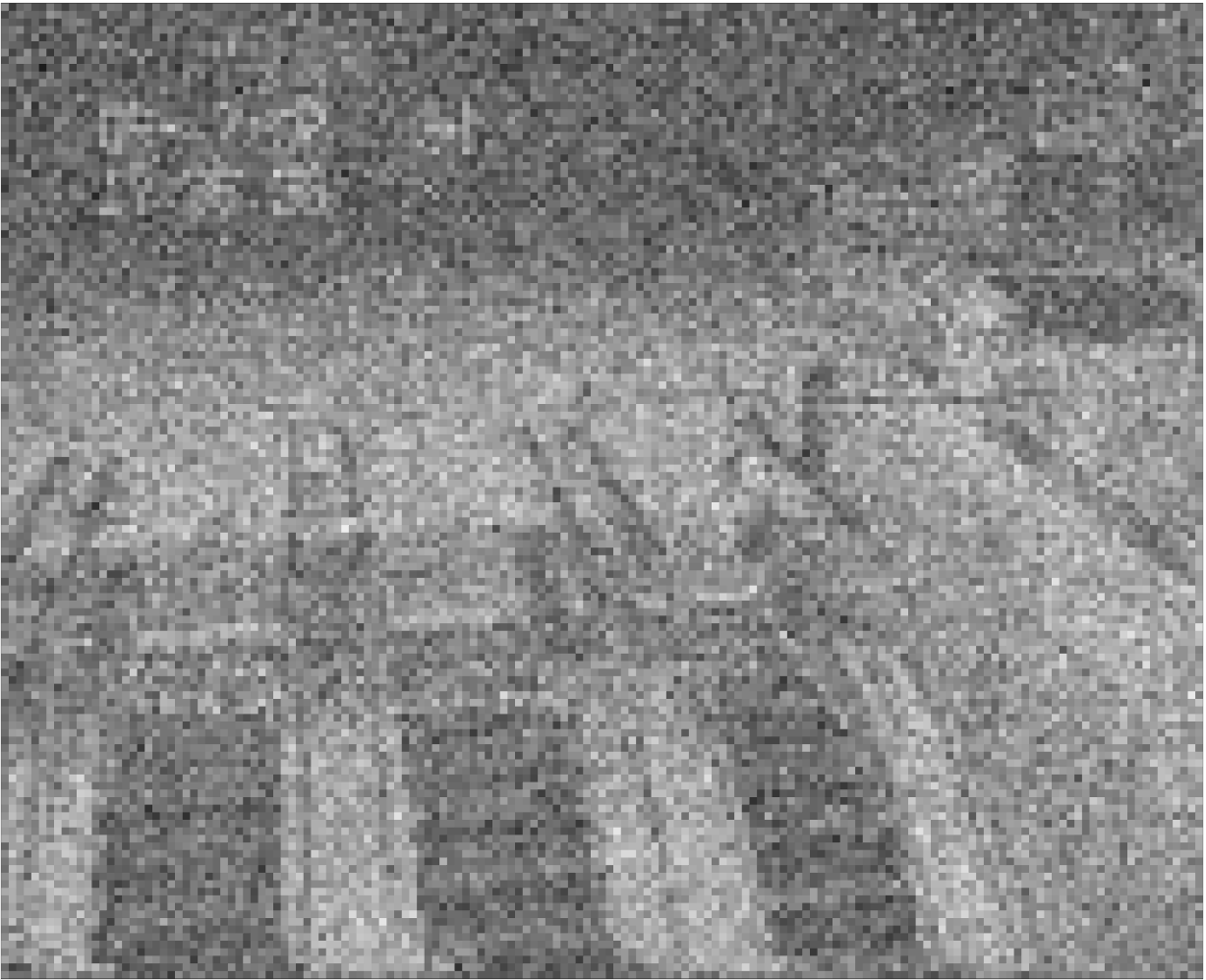}
    }
    \subfloat
    {
      \includegraphics[width=0.18\linewidth,
        angle=0]{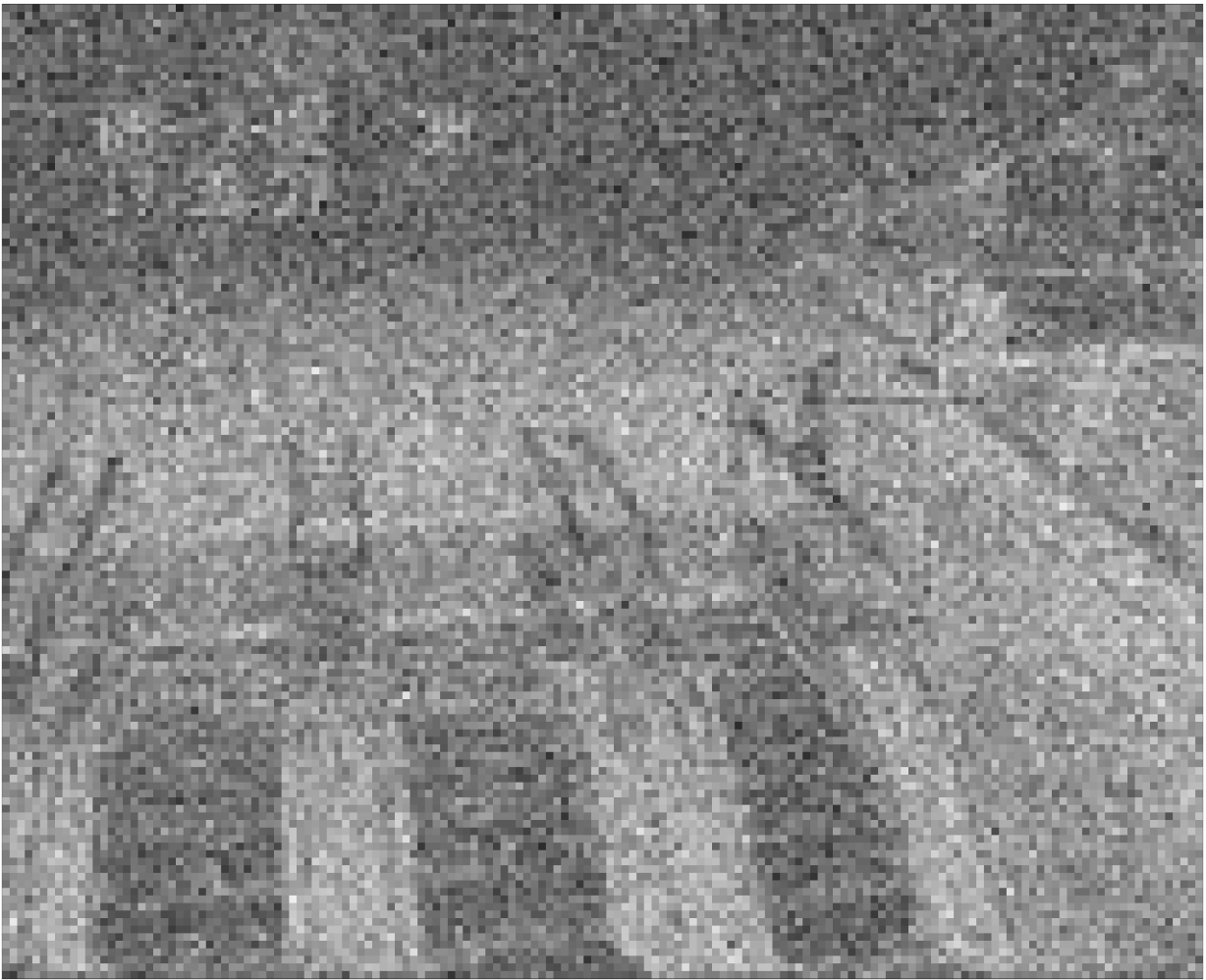}
    }
    \subfloat
    {
      \includegraphics[width=0.18\linewidth,
        angle=0]{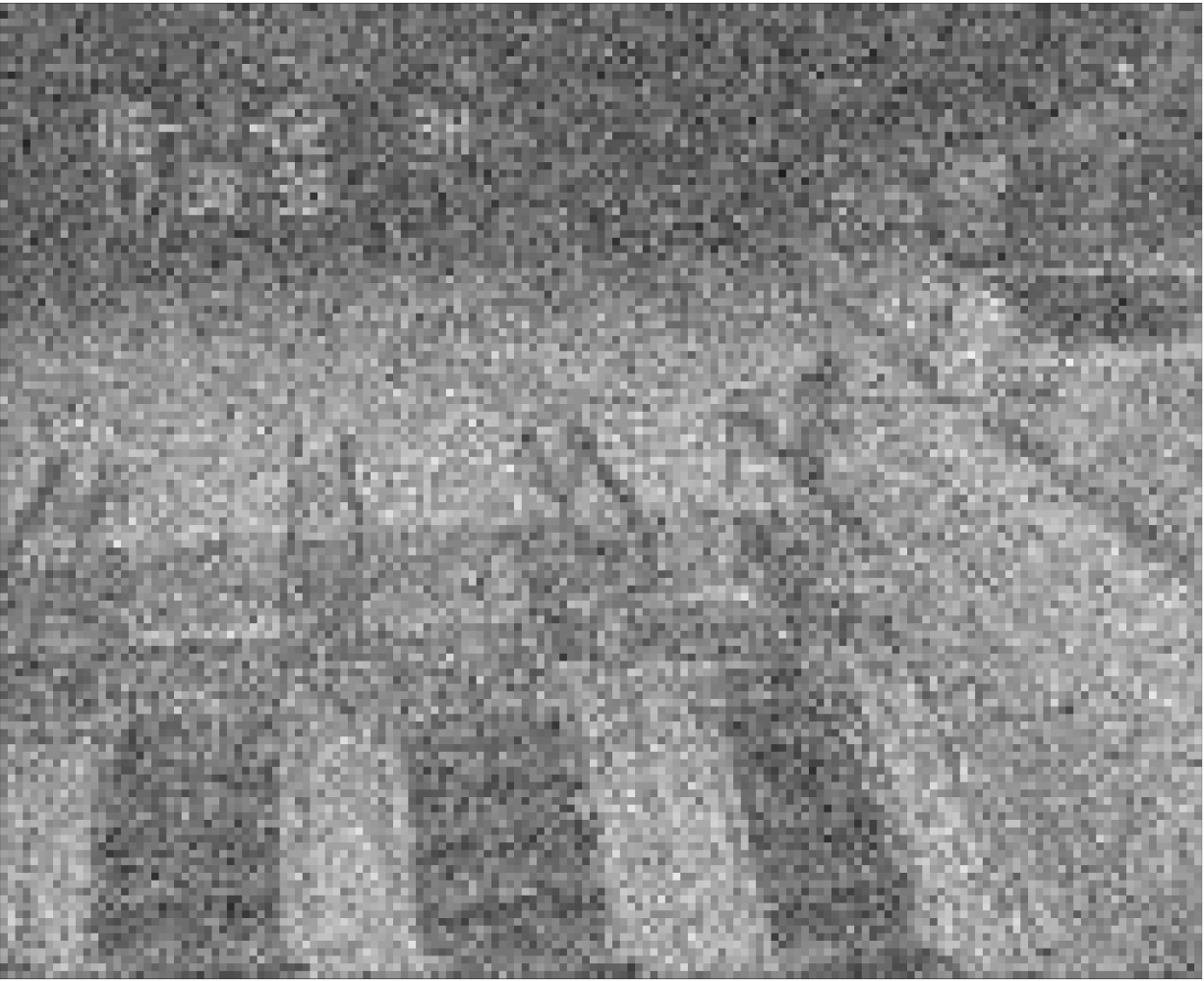}
    }
    \subfloat
    {
      \includegraphics[width=0.18\linewidth,
        angle=0]{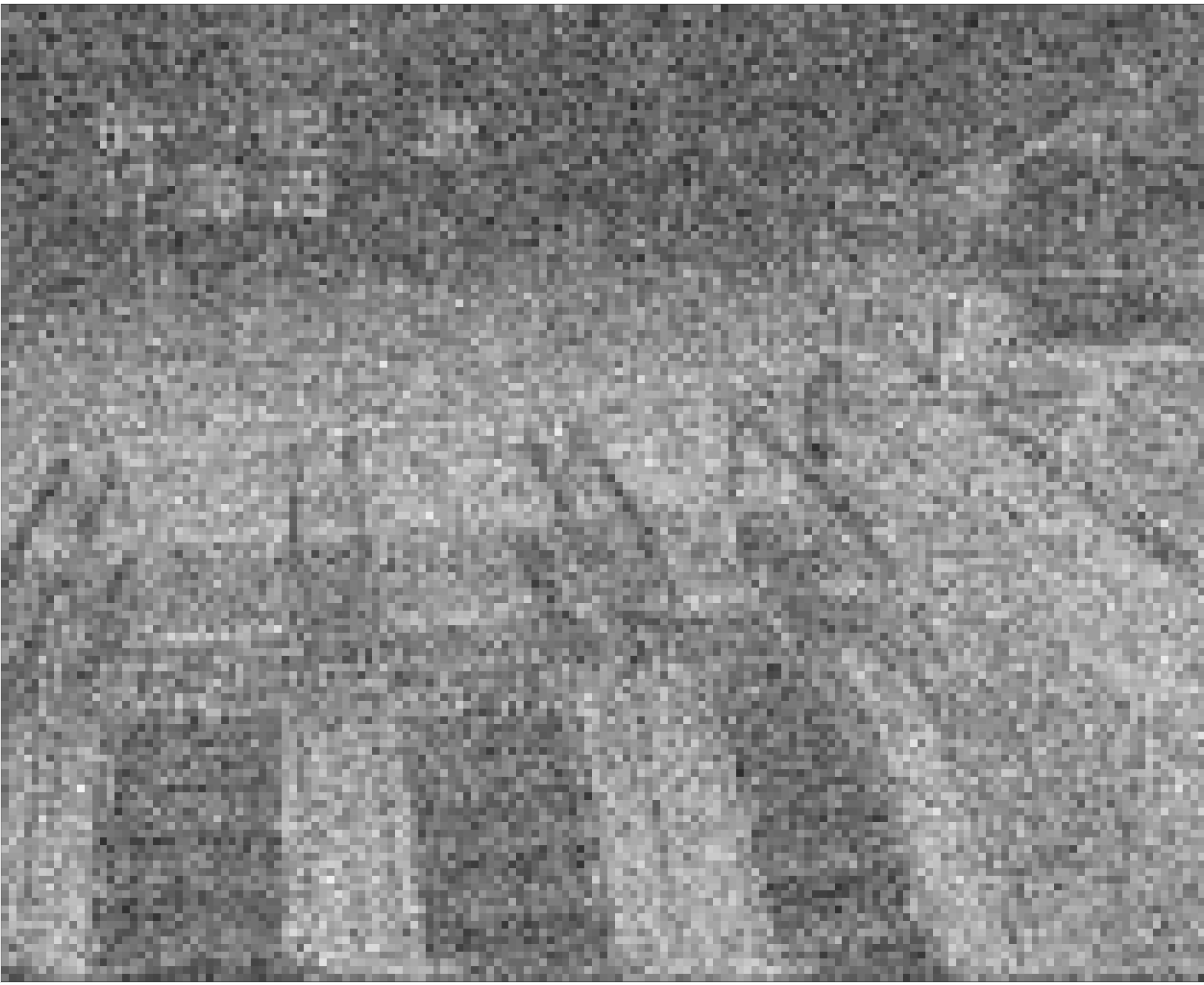}
    }
    \vspace{-0.02\linewidth}
  \end{minipage}
  \begin{minipage}{1.0\textwidth}
    \centering
    \subfloat
    {
      \includegraphics[width=0.18\linewidth,
        angle=0]{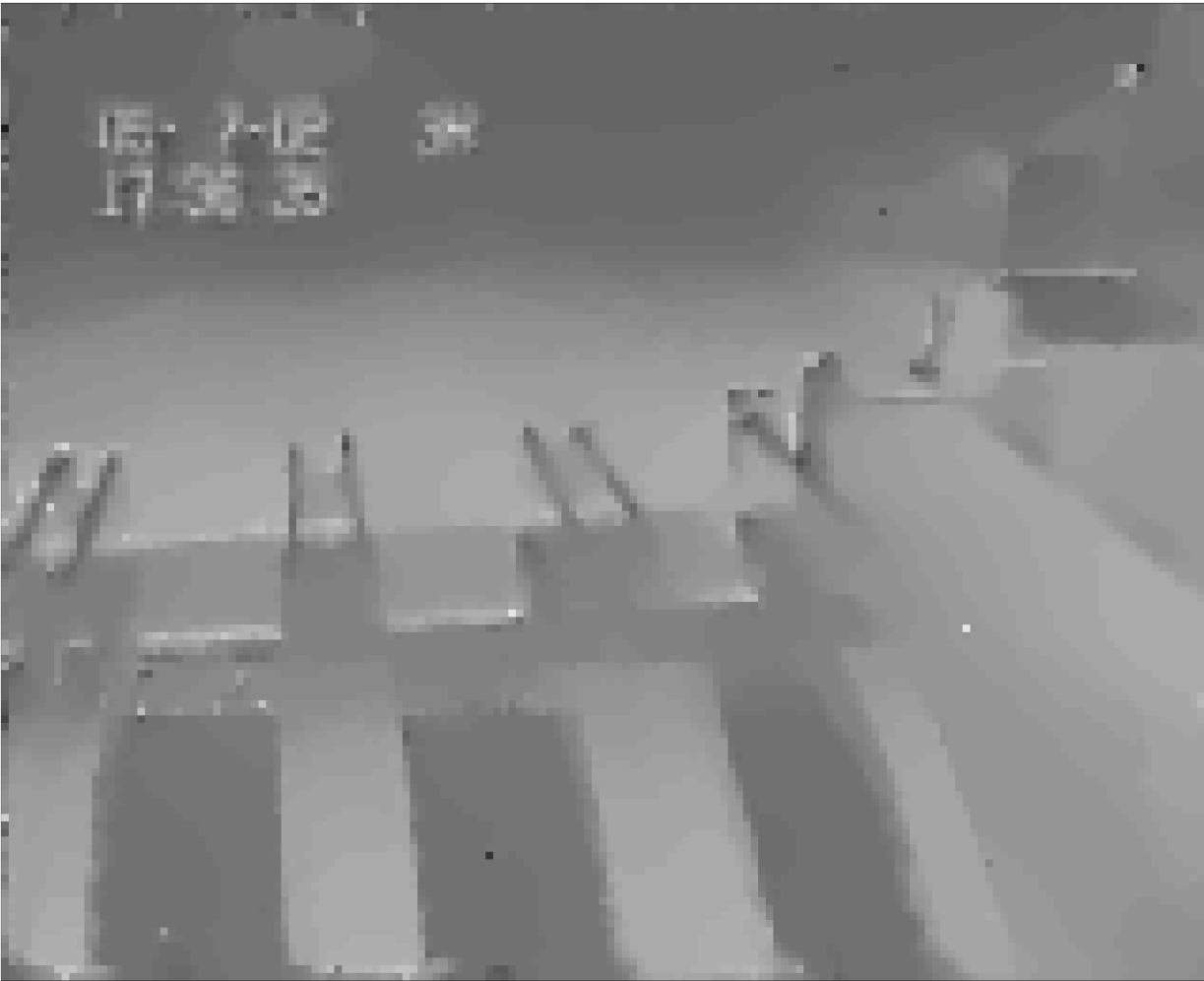}
    }
    \subfloat
    {
      \includegraphics[width=0.18\linewidth,
        angle=0]{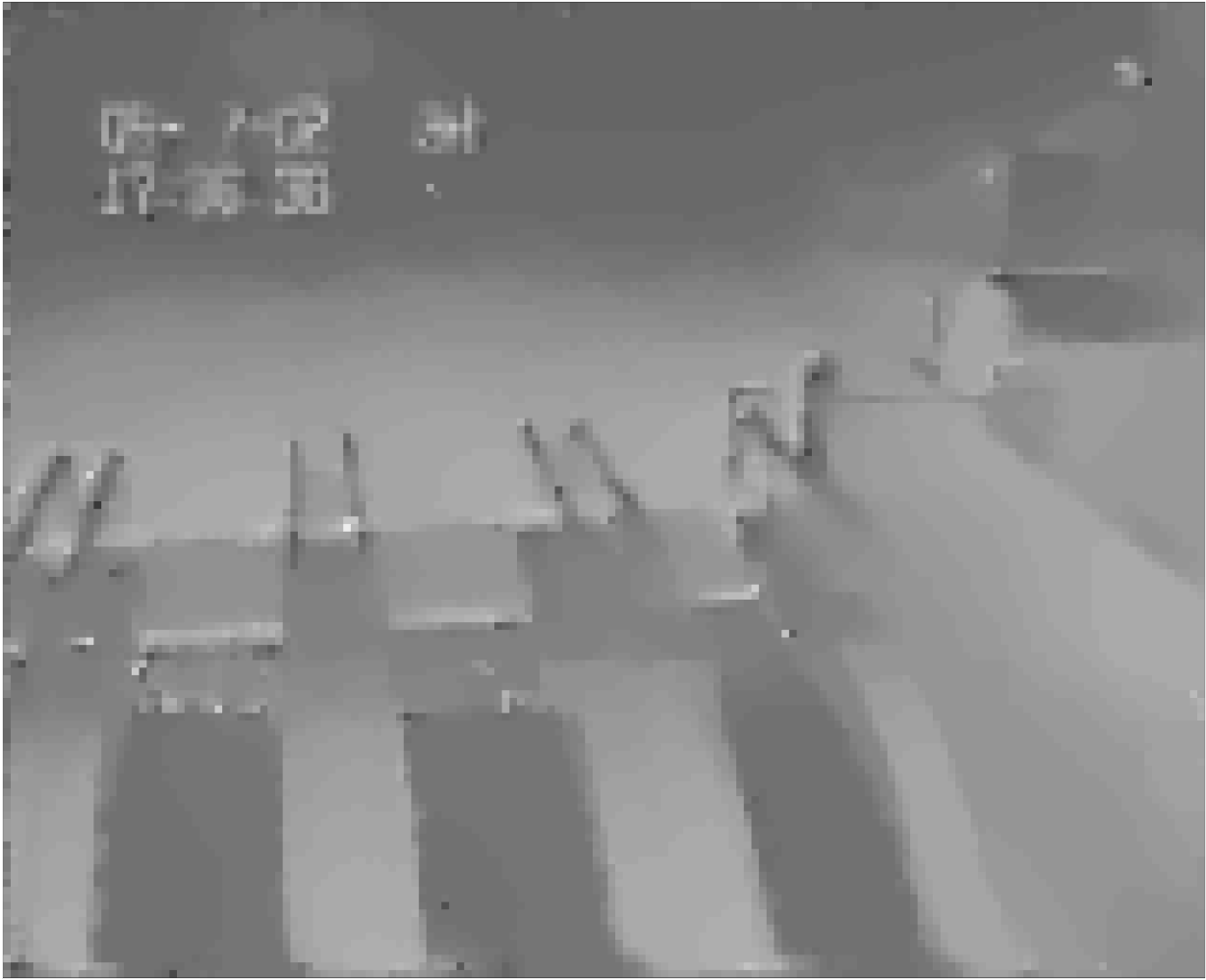}
    }
    \subfloat
    {
      \includegraphics[width=0.18\linewidth,
        angle=0]{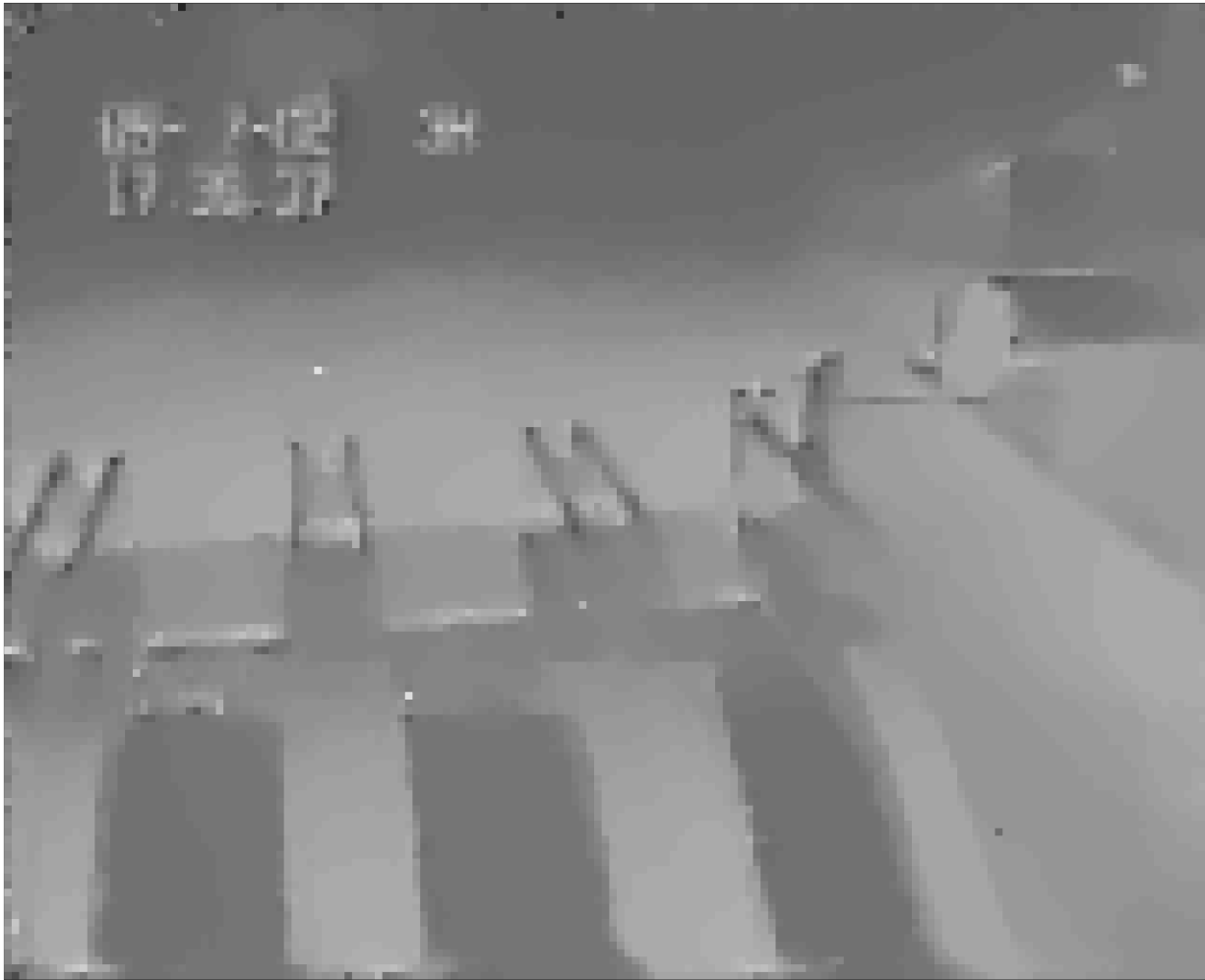}
    }
    \subfloat
    {
      \includegraphics[width=0.18\linewidth,
        angle=0]{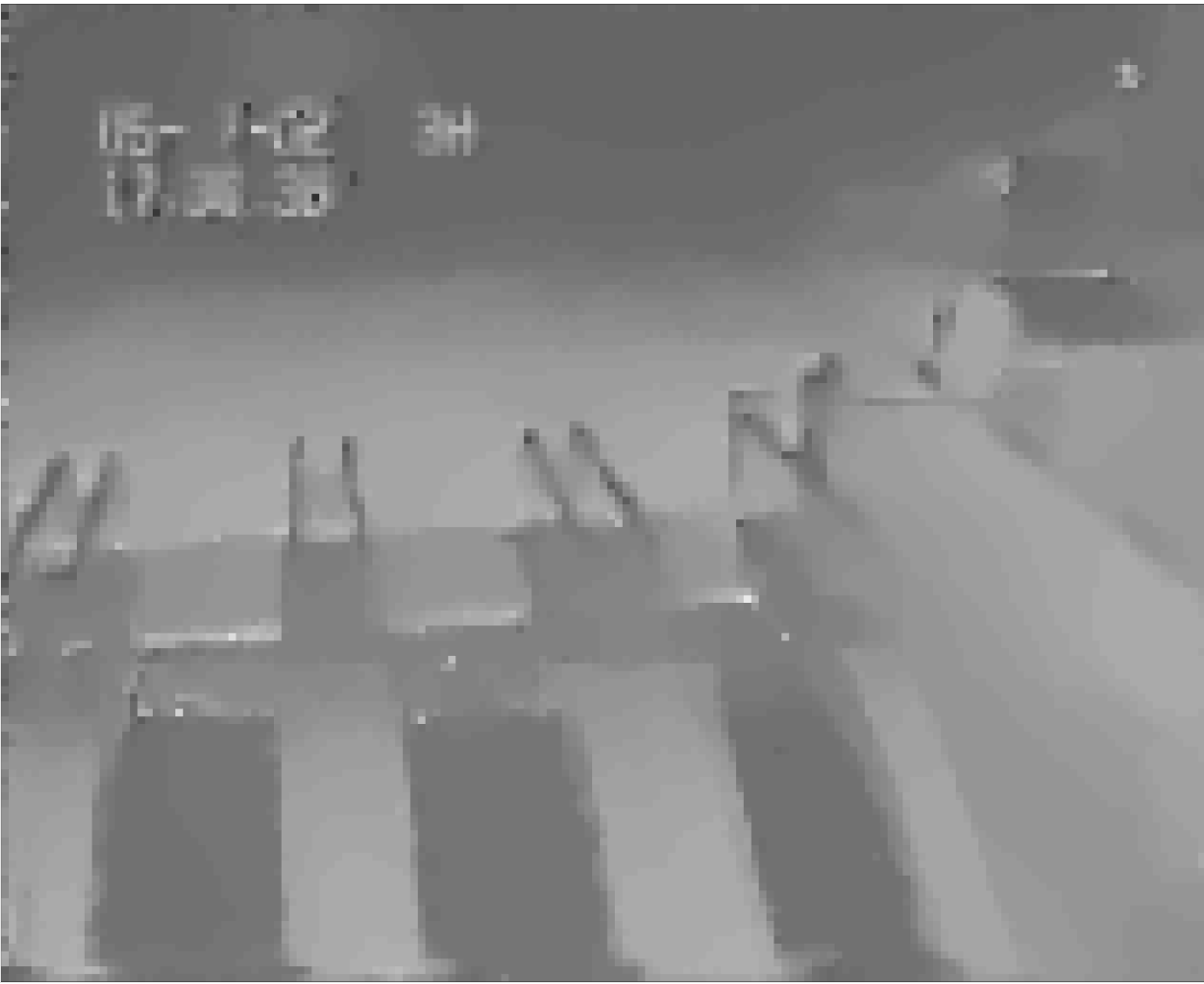}
    }
    \subfloat
    {
      \includegraphics[width=0.18\linewidth,
        angle=0]{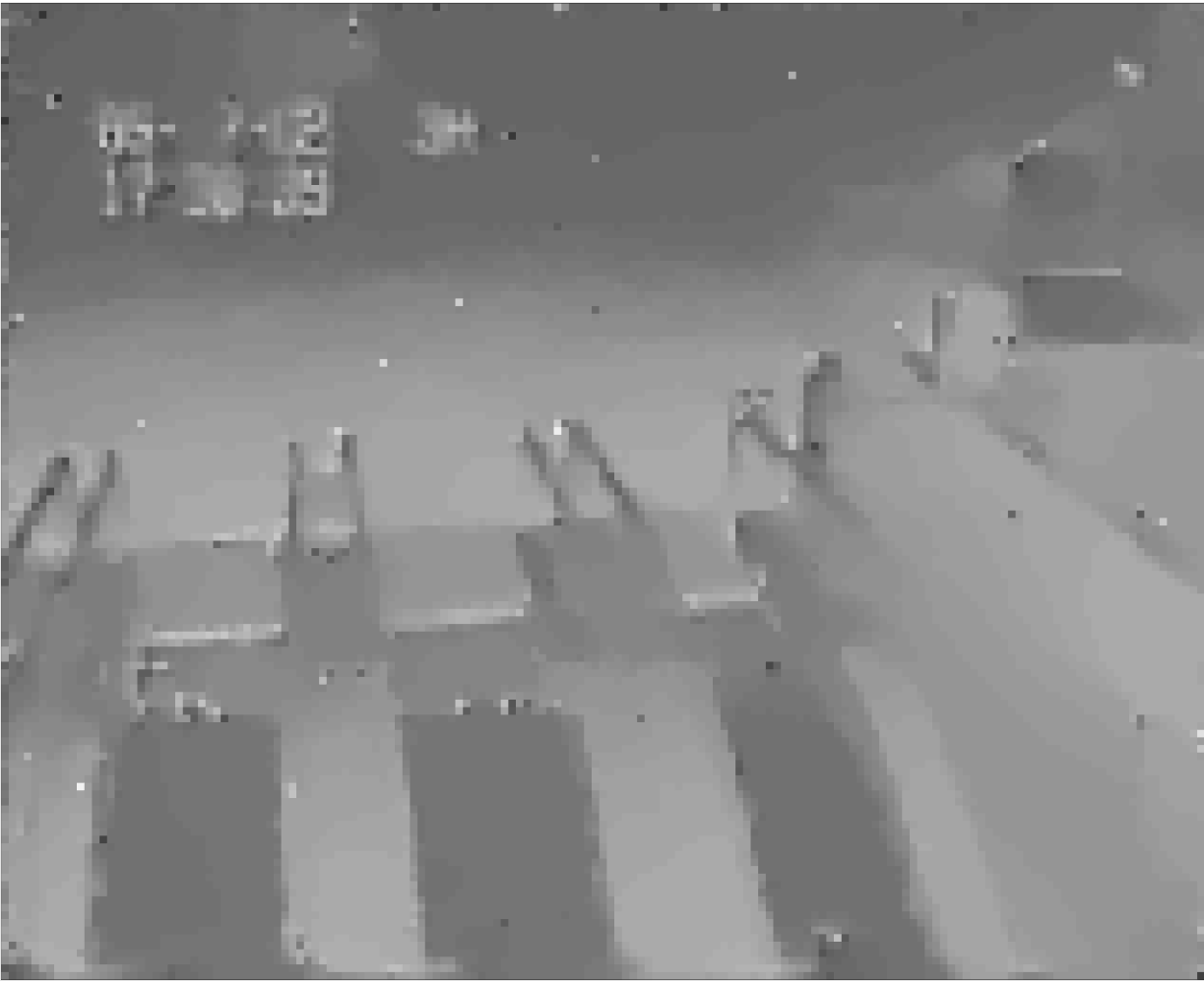}
    }
  \end{minipage}
  \caption{Contrast of initial noisy monitoring video and its denoised copy, by using proposed multidimensional TV-Stokes model. Row one and row three are sequences from initial video. Row two and row four are sequences from denoised results.}
  \label{fig:escalator}
\end{figure}

\section*{Acknowledgments}
We thank Dr. Andreas Langer for interesting discussion and for providing us with the 3d data.

\bibliographystyle{siamplain}
\bibliography{refs}

\end{document}